\DeclareSymbolFont{AMSb}{U}{msb}{m}{n} 			
\renewcommand{\in}{\smallin}
\renewcommand{\notin}{\notsmallin}
\renewcommand{\setminus}{\smallsetminus}
\newlength\figurewidth}
\newcommand\fdash{\settowidth\figurewidth{9}\raisebox{0.6ex}{\makebox[\figurewidth]{\hrulefill}}}
\newtheorem{theorem}{Theorem}
\newtheorem{proposition}[theorem]{Proposition}
\newtheorem{lemma}[theorem]{Lemma}
\newtheorem{announcement}[theorem]{Announcement}
\newtheorem*{claim*}{Claim}
\newtheorem{corollary}[theorem]{Corollary}
\theoremstyle{definition}
\newtheorem{definition}[theorem]{Definition}
\newtheorem*{problem}{Problem}
\newcommand{\pw}[1]{\mathcal{P}\left(#1\right)}  
\newcommand{\iter}{\mathbin{*}}          
\newcommand{\os}{\mleft\{ \,}             
\newcommand{\cs}{\, \mright\}}             
\newcommand{\la}{\left\langle \,}           
\newcommand{\ra}{\, \right\rangle}          
\newcommand{\card}[1]{\mleft| #1 \mright|}    
\DeclareMathOperator{\cov}{cov}
\DeclareMathOperator{\cof}{cof}
\DeclareMathOperator{\dom}{dom}
\DeclareMathOperator{\cod}{cod}
\newcommand{\join}{\mathbin{\mathrm{join}}}
\renewcommand{\restriction}{\mathbin{\!\upharpoonright}}   
\renewcommand{\mid}{\shortmid}             
\title[No P-points in Silver extensions]{There are no P-points in Silver extensions}
\author{David Chodounský}
\address{Institute of Mathematics of the Czech Academy of Sciences,
Žitná~25, \linebreak[2] Praha~1, Czech Republic}
\email{chodounsky@math.cas.cz}
\author{Osvaldo Guzmán} 
\address{University of Toronto,
27 King's College Circle, Toronto, Canada}
\email{oguzman@math.utoronto.ca}
\subjclass[2010]{Primary: 03E35}
\keywords{P-point, Silver forcing, Silver model, canonical model}
\thanks{
	The first author was supported by the GACR
	project 17\fdash33849L and RVO:\ 67985840.
	The second author was supported by NSERC grant number~455916 
	and his visit to Prague was funded by the GACR
	project 15\fdash34700L and RVO:\ 67985840.
	Some of the work and consultations with J.\ Zapletal were conducted during the
	ESI workshop \emph{Current Trends in Descriptive Set Theory} held in December 2016 in Vienna.
}
\begin{document}

\begin{abstract}
	We prove that after adding a Silver real no ultrafilter
	from the ground model can be extended to a P-point,
	and this remains to be the case in any further extension
	which has the Sacks property.
	We conclude that there are no P-points in the Silver model.
	In particular, it is possible to construct a model without P-points
	by iterating Borel partial orders.
	This answers a question of Michael Hrušák.
	We also show that the same argument can be used for
	the side-by-side product of Silver forcing. This provides
	a model without P-points with the continuum arbitrary large,
	answering a question of Wolfgang Wohofsky.
\end{abstract}

\maketitle

\noindent
The first author dedicates this work to his teacher, mentor and dear friend Bohuslav Balcar.
The crucial result was proved on the day of his passing.

\section*{Introduction}\label{sec:intro}

\noindent
Ultrafilters on countable sets have become of great importance in infinite combinatorics.
A non-principal ultrafilter $\mathcal{U}$ is called a \emph{P-point} if
every countable subset of $\mathcal{U}$ has a pseudointersection in $\mathcal{U}$. 
Recall that a set $X\subseteq\omega$ is called a \emph{pseudointersection} of a
family $\mathcal{B} \subseteq {[\omega]}^{\omega}$ if $X \setminus B$ 
is finite for every $B\in\mathcal{B}$.
Ultrafilters of this special type have been extensively studied in set theory and topology. 
Walter Rudin in 1956 (see~\cite{Rudin}) proved that the topological space 
$\omega^* = \beta\omega \setminus \omega$ is not homogeneous assuming the continuum
hypothesis $\mathsf{CH}$. It is well known that the non-principal ultrafilters correspond in
a natural way to points of $\omega^*$ and P-points are exactly points 
with neighborhoods closed under countable intersections. 
Rudin proved the non-homogeneity 
of $\omega^*$ using the following argument: $\mathsf{CH}$ implies
that P-points exist, ultrafilters that are not P-points always exist,
and a P-point and a non-P-point have different topological types. 
Frolík established in~1967 (see~\cite{Frolik}) that
$\omega^*$ is not homogeneous without the need of the continuum hypothesis. 
Although Frolík's proof does not provide any specific types of ultrafilters, 
various distinct topological types of ultrafilters were later identified even without 
using any additional set-theoretic assumptions, see~\cite{Kunen-weak,vMill-16,Verner-lonely}. 

Nevertheless, P-points remain one of the central objects of research of 
set theorists and topologists. P-points are fundamental in forcing
theory; most of the methods of preserving an ultrafilter in generic 
extensions require preserving some kind of a P-point, the reader may consult 
e.g.~\cite{Barty, Zapletal-preserving} for more details. 
They also appear in the study of the
Tukey order~\cite{NatashaSurvey},
partition calculus~\cite{BaumgartnerTaylor}, 
model theory~\cite{Blass-RK},
and other topics.
The study of P-points is still a rich and active area of study, 
the reader may consult e.g.~\cite{Booth,MichaelVerner,EmbeddingPPoints}
for some more results and applications of P-points. 

\medskip

A well known result of Ketonen states that it is possible to construct a P-point if 
the dominating number is equal to the size of the continuum; 
$\mathfrak{d} = \mathfrak{c}$\footnote{
The dominating number $\mathfrak d$ is the least cardinality of a set of functions 
in $\omega^\omega$ such that every function is eventually dominated by 
a member of that set. $\mathfrak c$ is the cardinality of the continuum.
}~\cite{Ketonen}. 
It is also possible to construct P-points if the parametrized diamond principle
$\Diamond \mleft( \mathfrak{r} \mright)$\footnote{
The \emph{reaping number} $\mathfrak{r}$ is the smallest size of a
family $\mathcal{R}\subseteq {[\omega]}^{\omega}$ such that for
every $X\in {[\omega]}^{\omega}$ there is $R\in\mathcal{R}$ such
that either $R \subseteq X$ or $R \subseteq\omega\setminus X$. For more
information on the reaping number and cardinal characteristics of the continuum in general,
the reader may consult~\cite{BlassHandbook}.
The formulation of the associated diamond principle
$\Diamond \mleft( \mathfrak{r} \mright)$ is somewhat involved  
and since it is not used in the present paper, it is omitted.
}
holds, see~\cite{ParametrizedDiamonds} for more information on parametrized diamond principles.
On the other hand a remarkable theorem of
Shelah states that the existence of P-points cannot be proved
using just the axioms of $\mathsf{ZFC}$ alone.
This result was proved in 1977 and first published in~\cite{Wimmers}. 
The reader may find the proof in~\cite{Shelah-book}.
The model of Shelah is obtained by iterating the Grigorieff forcing
with parameters ranging over non-meager P-filters.

\smallskip

Independence results are often demonstrated in models obtained 
by employing forcing iterations of definable posets.
One possible formalization of such canonical models is treated 
in~\cite{ParametrizedDiamonds}.
We say that a partial order $(P, \leq)$ is Borel if there is a
Polish space $X$ such that $P$ is a Borel subset of $X$, and $\leq$ is a
Borel subset of $X \times X$.
A \emph{canonical model} is a model obtained by performing 
a countable support forcing iteration of Borel proper partial orders
of length~$\omega_2$.
At the \emph{Forcing and its applications retrospective workshop}
held at the Fields Institute in~2015 Michael Hrušák posed the following problem.

\begin{problem}
	Do P-points exist in every canonical model?
\end{problem}

A canonical model will contain a P-point if the steps of the iteration add unbounded reals 
or if no splitting reals are added---in the resulting model either
$\mathfrak{d} = \mathfrak{c}$ or $\Diamond (\mathfrak{r})$ does hold.
Consequently, one only needs to consider Borel
$\omega^{\omega}$-bounding forcing notions
which do add splitting reals.
The best known examples of this type of forcing are the random poset and the Silver poset.
We answer the question of Hrušák
in negative; Theorem~\ref{thm:Silver-iteration}
states that there are no P-points in the Silver model.

In~\cite{Cohen-random} it was claimed that there is a P-point
in the random model.
Unfortunately, the presented proof is incorrect
and the existence of P-points in the random model is presumably unknown.
We will address this issue in the Appendix section.

\begin{problem}
	Are there P-points in the random model?
\end{problem}

The existence of a model without P-points with the continuum
larger that $\omega_2$ was an open question~\cite{Wohofsky-thesis}.
Theorem~\ref{thm:Silver-product} states that forcing
with the side-by-side product of Silver forcing produces such a model.

\medskip

Our notation and terminology is mostly standard, 
including some folklore abuse of notation.
When $p$ is a partial function from $\omega$ to $2$, 
we denote this by $p; \omega \to 2$
and we write
$p^{-1}(1)$ instead of $p^{-1}[\os1\cs]$. 
We say that $\mathcal{I} = \os  I_{n} \mid n\in\omega \cs$ 
is an \emph{interval partition} if there is an increasing sequence of natural
numbers ${\langle m_{n} \rangle}_{n \in \omega}$ such that $m_{0} = 0$
and $I_{n}=[m_{n},m_{n+1})$. 

We say that a forcing notion $\mathbf{P}$ has the \emph{Sacks property} if for
every $p\in\mathbf{P}$ and for every $f$ such that
$p \Vdash \dot{f} \in \omega^{\omega}$ there is $q \leq p$ and 
$\os X_{n} \mid n \in \omega \cs$ such that 
$X_{n}\in{[\omega]}^{n+1}$ for every $n\in\omega$, and 
$q \Vdash \dot{f}(n)  \in X_{n}$ for each $n \in \omega$. 
It is a common practise to require in the 
definition of the Sacks property that $X_{n} \in {[\omega]}^{2^{n}}$, 
instead of $X_{n} \in {[\omega]}^{n+1}$ as we demanded.
Nevertheless, both resulting notions are equivalent, 
see e.g.~\cite[section~3]{SacksForcingandtheSacksProperty}.

If $p; \omega \to 2$ is a partial function we denote by by $[p]$ 
the set of all total function extending $p$, i.e.\ 
$[p]  = \os f\in 2^{\omega} \mid p \subseteq f \cs$.

\section*{Destroying P-points with Silver reals}\label{sec:kill}

\noindent
For a partial function $p; \omega \to 2$ we denote $\dom p$ the domain of $p$ and 
$\cod p = \omega \setminus \dom p$.
We denote the \emph{Silver forcing} (after Jack Howard Silver, see~\cite{Mathias-survey}) 
by $\mathbf{PS}$. Some authors also call this poset the Prikry--Silver forcing.
It consists of all
partial functions $p; \omega \to 2$ such that $\cod p$ is infinite, 
and relation $p \leq q$ is defined as $q \subseteq p$.
We will always assume that $p^{-1}(1)$ is infinite for each $p \in \mathbf{PS}$,
such conditions form a dense subset of the poset. 
If $G$ is a generic filter for the Silver forcing, 
the \emph{Silver generic real} is defined as 
$r = \bigcap \os [p]  \mid p\in G\cs$. 
It is well known and easy
to see that $G$ and $r$ can be defined from each other. 
A typical application of the Silver forcing is to demonstrate that the inequality
$\cof \mathcal{N} < \mathfrak{r}$ is consistent.\footnote{
$\mathcal{N}$ is the ideal of Lebesgue null subsets of the real line.
}
The reader may consult~\cite{Halbeisen} for an introduction and
more information regarding the Silver forcing. 
It is well known that the Silver forcing is proper and has the Sacks property.

For a partial (or total) function $p; \omega \to 2$
define an interval partition of~$\omega$ by letting
$I_n(p) = \os k \in \omega \mid \card{k \cap p^{-1}(1)} = n \cs$ for $n \in \omega$ 
and $\mathcal{I} (p)  =  \os I_{n} (p)  \mid n \in \omega\cs$. 
Note that if $q$ extends $p$, then $\mathcal{I}( q)$ refines $\mathcal{I}(p)$, 
i.e.\ every interval of $\mathcal{I}(p)$ is the union of intervals of $\mathcal{I}(q)$. 
Moreover, if $r$ is the generic real,
then $\mathcal{I}(r)$ refines $\mathcal{I}(p)$ for every $p$ in the
generic filter. The proofs of the following simple observations are left for the reader.

\begin{lemma}\label{lem:help1}
Let $p \in \mathbf{PS}$ and $k \in\omega$ be such that 
$I_{k}(p) \subseteq \dom p$. 
\begin{enumerate}
\item If $q \leq p$, then $I_{k}(p) \in \mathcal{I}(q)$.
\item $p\Vdash I_{k}(p) \in \mathcal{I}(\dot{r})$ 
(where $\dot{r}$ is the name for the generic real).
\end{enumerate}
\end{lemma}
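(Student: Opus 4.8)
The statement to prove is Lemma~\ref{lem:help1}, which says: if $p \in \mathbf{PS}$ and $k \in \omega$ satisfy $I_k(p) \subseteq \dom p$, then (1) for every $q \leq p$ we have $I_k(p) \in \mathcal{I}(q)$, and (2) $p \Vdash I_k(p) \in \mathcal{I}(\dot r)$.

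The plan is to observe that if $I_k(p) \subseteq \dom p$, then the value of $\card{j \cap p^{-1}(1)}$ is already fully determined by $p$ for every $j$ in the closure of $I_k(p)$ on the left — that is, up to and including all of $I_k(p)$. Write $I_k(p) = [a, b)$. Since $I_k(p) \subseteq \dom p$, every element of $[0,b)$ below $a$ also lies in $\dom p$... actually more carefully: $I_k(p)$ being a block of the interval partition $\mathcal I(p)$ means exactly that $\card{a \cap p^{-1}(1)} = k$, that $\card{j \cap p^{-1}(1)} = k$ for all $j \in [a,b)$, and $\card{b \cap p^{-1}(1)} = k+1$. The assumption $I_k(p) \subseteq \dom p$ gives that $p$ is defined on every integer in $[a, b)$, hence $p(j)$ is known for $a \le j < b$. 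In particular $p(j) = 0$ for $a \le j < b-1$ (else the count would jump inside $[a,b)$) and $p(b-1) = 1$.

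Now for~(1): let $q \leq p$, i.e.\ $p \subseteq q$. Then $q^{-1}(1) \cap b = p^{-1}(1) \cap b$, because $q$ restricted to $[0,b)$ equals $p$ restricted to $[0,b)$ (as $[a,b) \subseteq \dom p$ and, for indices below $a$, we need them in $\dom p$ too; here I would note that if $a \notin \dom p$ that is fine because $\card{a \cap p^{-1}(1)}$ is still well-defined as the count of $1$'s strictly below $a$ — but to compare with $q$ I should argue directly that $q^{-1}(1) \cap a$ could differ from $p^{-1}(1)\cap a$... no: $p \subseteq q$ forces $p^{-1}(1) \subseteq q^{-1}(1)$, and $p^{-1}(0) \subseteq q^{-1}(0)$, so on $\dom p$ the functions agree; but $q$ might be defined where $p$ is not). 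The cleanest route: since $p \subseteq q$, we have $p^{-1}(1) \subseteq q^{-1}(1)$, so $\card{j \cap p^{-1}(1)} \le \card{j \cap q^{-1}(1)}$ for all $j$; since $\mathcal I(q)$ refines $\mathcal I(p)$ (noted in the text before the lemma), every block of $\mathcal I(q)$ is contained in a block of $\mathcal I(p)$, and $I_k(p)$ is a union of blocks of $\mathcal I(q)$. It remains to see it is a \emph{single} block of $\mathcal I(q)$. This follows because $q$ agrees with $p$ on $[a,b)$: indeed $[a,b) = I_k(p) \subseteq \dom p \subseteq \dom q$ and $p \subseteq q$, so $q(j) = p(j)$ for $j \in [a,b)$; hence within $[a,b)$, $q$ places a $1$ only at $b-1$, so $\card{j \cap q^{-1}(1)}$ is constant for $a \le j < b$ and strictly larger at $b$. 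And at the left endpoint, $a$ is the start of a block of $\mathcal I(q)$ because it is the start of a block of $\mathcal I(p)$ and blocks of $\mathcal I(q)$ refine those; similarly $b$ starts a block of $\mathcal I(q)$. Therefore $[a,b) \in \mathcal I(q)$, which is~(1).

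For~(2): the generic real $r = \bigcap\os [p] \mid p \in G \cs$ is a total function extending every condition in $G$, so working below $p$ we have $p \subseteq r$ and the identical computation shows $r$ places exactly one $1$ inside $[a,b)$, namely at $b-1$, with $\card{a \cap r^{-1}(1)} = k$; hence $I_k(r) = [a,b) = I_k(p)$ and so $p \Vdash I_k(p) \in \mathcal I(\dot r)$. The main (and only) subtlety is the bookkeeping at the two endpoints of the interval — making sure that the block of $\mathcal I(q)$ (resp.\ $\mathcal I(r)$) containing $a$ neither extends to the left past $a$ nor to the right past $b$ — and this is handled by the refinement remark together with the fact that $q$ and $r$ literally agree with $p$ on $[a,b)$ because that interval is inside $\dom p$. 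No genuine obstacle arises; this is a routine unwinding of the definition of $\mathcal I(\cdot)$, which is why the authors leave it to the reader.
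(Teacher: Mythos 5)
Your proof is correct in substance and, since the paper explicitly leaves this observation to the reader, there is no authorial proof to compare against; your argument is the natural one. The key point, which you correctly identify, is that $I_k(p) \subseteq \dom p$ forces any $q \leq p$ (and the generic real $r$) to literally agree with $p$ on $I_k(p) = [a,b)$, so that the count of $1$s is constant on $[a,b)$ and jumps at $b$, while the refinement of interval partitions guarantees the block of $\mathcal I(q)$ (resp.\ $\mathcal I(\dot r)$) containing $a$ begins exactly at $a$; together these yield $[a,b) \in \mathcal I(q)$ (resp.\ $[a,b) \in \mathcal I(\dot r)$).

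There is one small slip in your treatment of~(2): you assert $\card{a \cap r^{-1}(1)} = k$ and hence $I_k(r) = [a,b) = I_k(p)$. This is false in general --- the generic real may (and typically will) place additional $1$s below $a$ at coordinates in $\cod p$, so $[a,b)$ appears in $\mathcal I(r)$ with a strictly larger index. Quantifying this shift precisely is exactly the content of the paper's Corollary~\ref{cor:help}, which would be trivially vacuous if your claim held. Fortunately the slip is harmless here: the lemma only asserts $I_k(p) \in \mathcal I(\dot r)$, not $I_k(p) = I_k(\dot r)$, and your correct observations (agreement on $[a,b)$ together with the refinement property) already deliver that, exactly as in your treatment of~(1), where you were careful to note the index may change.
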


\begin{lemma}\label{lem:help2}
Assume that $p,q \in \mathbf{PS}$ and $k,n\in\omega$ are such that
\begin{enumerate}
\item $q\leq p$,
\item 
$I_k(p) \in \mathcal{I}(q)$, and
\item $\card{ q^{-1}(1) \cap \min I_{k}(p)} = 
\card{ p^{-1}(1) \cap \min I_{k}(p)} + n$.
\end{enumerate}
Then $I_{k}(p) = I_{k+n}(q)$.
\end{lemma}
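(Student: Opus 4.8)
The plan is to unwind the definitions of the interval partitions $\mathcal{I}(p)$ and $\mathcal{I}(q)$ and argue by a short counting of $1$'s. The key reformulation I would record first is this: for any partial function $f;\omega\to 2$ with $f^{-1}(1)$ infinite, the intervals of $\mathcal{I}(f)$ are exactly the level sets of the nondecreasing unbounded function $k\mapsto\card{k\cap f^{-1}(1)}$; concretely, $j\in I_i(f)$ if and only if $\card{j\cap f^{-1}(1)}=i$, and in particular $\mathcal{I}(f)$ is a partition of $\omega$ into intervals. I note that hypothesis~(1) will not actually be used — it is listed only to make hypothesis~(3) natural, since $p\subseteq q$ forces $\card{q^{-1}(1)\cap N}\ge\card{p^{-1}(1)\cap N}$ for every $N\in\omega$ — so the argument rests on~(2) and~(3) alone.

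Next I would put $m=\min I_k(p)$. Since $m\in I_k(p)$, the reformulation applied to $f=p$ gives $\card{m\cap p^{-1}(1)}=k$. Substituting this into hypothesis~(3) yields $\card{m\cap q^{-1}(1)}=k+n$, and the reformulation applied to $f=q$ turns this into the statement $m\in I_{k+n}(q)$.

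To finish, I would invoke hypothesis~(2): $I_k(p)$ is one of the intervals of $\mathcal{I}(q)$, and $m=\min I_k(p)$ lies in it; but $m$ also lies in $I_{k+n}(q)$ by the previous step. Since distinct members of a partition are disjoint, $I_k(p)$ and $I_{k+n}(q)$ must coincide, which is the conclusion of the lemma. I do not expect any real obstacle here; the only thing to watch is the indexing convention, namely that $\card{k\cap f^{-1}(1)}$ counts the elements of $f^{-1}(1)$ strictly below $k$, so that $m=\min I_k(p)$ has $p$-count exactly $k$ (and not $k-1$). If one wishes to also use hypothesis~(1), an alternative closing step is to verify directly that $\min I_{k+n}(q)=m$ as well and then match the two intervals of $\mathcal{I}(q)$ by their common left endpoint.
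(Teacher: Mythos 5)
Your proof is correct; the paper leaves this lemma to the reader, and your argument — computing the $q$-count at $m=\min I_k(p)$ to place $m$ in $I_{k+n}(q)$, then matching it against the member $I_k(p)$ of the partition $\mathcal{I}(q)$ containing $m$ — is the intended direct verification. Your side remark that hypothesis~(1) is never invoked is also accurate.
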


As a consequence of these observations we conclude the following.

\begin{corollary}\label{cor:help}
If $p\in \mathbf{PS}$ and $k\in\omega$ are such that 
$I_{k}(p) \subseteq \dom p$, then $p$ forces that:
There is $n \in \omega$, $n \leq\card{ \cod p  \cap \min I_{k}(p)}$ 
such that $I_{k}(p) = I_{k+n}(\dot{r})$ 
(where $\dot{r}$ is the name for the generic real).
\end{corollary}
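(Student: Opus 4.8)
The plan is to fix an arbitrary generic filter $G \ni p$ with $r$ the corresponding Silver generic real, produce the required $n$ inside $V[G]$, and conclude because $G$ was arbitrary. The guiding idea is that $I_{k}(p)$ is already frozen by $p$ on the part of $\omega$ where it lies, since $I_{k}(p) \subseteq \dom p$; the only thing the generic decides is how many of the coordinates below $\min I_{k}(p)$ that belong to $\cod p$ get value $1$ under $r$, and this count will be exactly the shift.

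First I would use genericity to choose $q \in G$ with $q \leq p$ and $\min I_{k}(p) \subseteq \dom q$, i.e.\ $q$ defined on every natural number below $\min I_{k}(p)$; such conditions are dense below $p$. Put $n = \card{q^{-1}(1) \cap \min I_{k}(p)} - \card{p^{-1}(1) \cap \min I_{k}(p)}$. Since $q \leq p$ means $p \subseteq q$ as partial functions, every $1$ of $q$ below $\min I_{k}(p)$ that is not already a $1$ of $p$ lies in $\cod p \cap \min I_{k}(p)$, so $0 \leq n \leq \card{\cod p \cap \min I_{k}(p)}$, which is the bound we need. Note also that $\card{p^{-1}(1) \cap \min I_{k}(p)} = k$, since $\min I_{k}(p)$ is by definition the least $l$ with $\card{l \cap p^{-1}(1)} = k$.

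Next I would show $I_{k}(p) = I_{k+n}(q)$. The hypothesis $I_{k}(p) \subseteq \dom p$ and Lemma~\ref{lem:help1}(1) give $I_{k}(p) \in \mathcal{I}(q)$; together with the choice of $n$ this is precisely the input of Lemma~\ref{lem:help2}, whose output is $I_{k}(p) = I_{k+n}(q)$. To transfer this to $r$: since $I_{k+n}(q) = I_{k}(p) \subseteq \dom p \subseteq \dom q$, Lemma~\ref{lem:help1}(2) applied to $q$ and $k+n$ gives $q \Vdash I_{k+n}(q) \in \mathcal{I}(\dot r)$, so in $V[G]$ the set $I_{k}(p)$ is some block of $\mathcal{I}(r)$. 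To identify that block, observe that $q$ is total below $\min I_{k}(p)$ and $q \subseteq r$, hence $\card{r^{-1}(1) \cap \min I_{k}(p)} = \card{q^{-1}(1) \cap \min I_{k}(p)} = k + n$, so $\min I_{k}(p) \in I_{k+n}(r)$. A block of an interval partition is determined by any one of its members, so $I_{k}(p) = I_{k+n}(r)$. As $p$ and $G$ were arbitrary, $p$ forces the statement.

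The only subtle point --- and the reason Lemmas~\ref{lem:help1} and~\ref{lem:help2} are stated the way they are --- is keeping track of the \emph{index} of the block, not merely the block itself: Lemma~\ref{lem:help1}(2) alone tells us only that $I_{k}(p)$ reappears as some block of $\mathcal{I}(\dot r)$. Requiring $q$ to be total below $\min I_{k}(p)$ is exactly what pins down the number of $1$'s of $r$ there, hence the index $k+n$. Everything else is routine counting with refining interval partitions.
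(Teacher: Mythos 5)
Your proof is correct and uses exactly the machinery the paper intends: working in a generic extension, you pass to a condition $q\in G$ that is total below $\min I_k(p)$, compute the shift $n$ from the extra $1$'s $q$ adds on $\cod p\cap\min I_k(p)$, and then combine Lemma~\ref{lem:help1} with Lemma~\ref{lem:help2} to identify $I_k(p)$ as $I_{k+n}(\dot r)$. The paper omits the proof of the corollary entirely, so your care in pinning down the block index (rather than just noting that $I_k(p)$ reappears as \emph{some} block of $\mathcal I(\dot r)$) is a welcome spelling-out of the implicit argument.
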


By $-_{n}$ and $=_{n}$ we denote
the subtraction operation and congruence relation modulo~$n$.
The notation $k \in_n X$ is interpreted as `there is $x \in X$ such that $k =_n x$.'
For $X, Y \subset n$ we write $X -_n Y = \os x -_n y \mid x \in X, y \in Y \cs$.

\begin{lemma}\label{lem:shift}
    For each $n \in \omega$ there exists $k(n) \in \omega$ such that
    for each set $C \in {[k(n)]}^n$ there exists $s \in k(n)$
    such that $C \cap \mleft(C -_{k(n)} \os s \cs\mright) = \emptyset$.
\end{lemma}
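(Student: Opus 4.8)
The plan is to restate the requirement $C \cap \mleft( C -_{k(n)} \os s \cs \mright) = \emptyset$ as the condition that the shift $s$ avoid the difference set of $C$, and then to take $k(n)$ large enough that such an $s$ is forced to exist. Fix $n$ and let $k = k(n)$ be a value still to be chosen; we regard $k = \os 0, 1, \dots, k-1 \cs$ as equipped with addition and subtraction modulo $k$. For $C \in {[k]}^n$ set $D(C) = \os c -_{k} c' \mid c, c' \in C \cs$. For any $s \in k$ we have $C \cap \mleft( C -_{k} \os s \cs \mright) \neq \emptyset$ if and only if there are $c, c' \in C$ with $c = c' -_{k} s$, that is $s = c' -_{k} c$, that is $s \in D(C)$. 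Hence it is enough to choose $k$ so that $k \setminus D(C) \neq \emptyset$ for every $C \in {[k]}^n$; any $s$ in this set then works, and the useless shift $s = 0$ is automatically excluded when $n \geq 1$, since $0 \in D(C)$.

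To finish I would bound $\card{D(C)}$. We have $0 \in D(C)$, and for each nonzero $d \in D(C)$ also $-d \in D(C)$, so the nonzero elements of $D(C)$ fall into at most $\binom{n}{2}$ pairs $\os d, -d \cs$; therefore $\card{D(C)} \leq 1 + 2\binom{n}{2} = n^2 - n + 1$. (Even the trivial bound $\card{D(C)} \leq n^2$ would be enough.) Consequently $k(n) = n^2 - n + 2$ has the required property: for every $C \in {[k(n)]}^n$ we get $\card{D(C)} \leq k(n) - 1$, hence $k(n) \setminus D(C) \neq \emptyset$, and any $s$ in it satisfies the conclusion of the lemma. The degenerate case $n = 0$ is vacuously true and is covered as well, as $k(0) = 2 \geq 1$.

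This is an elementary pigeonhole argument and I do not anticipate a genuine obstacle. The only points that deserve a little care are getting the equivalence in the first paragraph exactly right — one must keep track of which of $c -_k c'$ and $c' -_k c$ plays the role of the shift, though it does not matter in the end since $D(C)$ is symmetric — and the counting of $\card{D(C)}$, where exploiting the symmetry $d \leftrightarrow -d$ yields the sharp bound while the crude bound already suffices.
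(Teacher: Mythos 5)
Your proof is correct and takes the same pigeonhole approach as the paper: observe that $s$ fails exactly when $s$ lies in the difference set $C -_{k(n)} C$, which has at most $n^2$ elements, so any $k(n) > n^2$ works. Your sharper bound $n^2 - n + 1$ via the symmetry $d \leftrightarrow -d$ is a minor refinement that changes nothing essential.
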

\begin{proof}
	If $s$ does not satisfy the conclusion of the lemma,
	then $s \in C -_{k(n)} C$. As $\card{C -_{k(n)} C} \leq n^2$,
	any choice of $k(n) > n^2$ works as desired.
\end{proof}

The following proposition contains the main technical argument 
central for the results of this paper.

\begin{proposition}\label{prop:kill}
    Let $\mathcal U$ be a non-principal ultrafilter and $\dot{Q}$
    be a $\mathbf{PS}$-name for a forcing
    such that $\mathbf{PS} \iter \dot{Q}$ has the Sacks property.
    If $G\subset \mathbf{PS} \iter \dot{Q}$ is a generic filter over $V$,
    then $\mathcal{U}$ cannot be extended to a P-point in $V[G]$.
\end{proposition}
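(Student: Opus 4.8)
The plan is to argue by contradiction. Suppose some condition $p\in\mathbf{PS}\iter\dot{Q}$ forces that $\dot{\mathcal V}$ is a P-point and $\mathcal U\subseteq\dot{\mathcal V}$; fix a generic filter $G\ni p$ and work in $V[G]$, writing $r$ for the Silver real read off from the first coordinate of $G$ and $\mathcal V$ for the realisation of $\dot{\mathcal V}$.

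First I would extract a residue skeleton of $\mathcal V$ from the interval partition $\mathcal I(r)$. For $N\in\omega$ and $c<N$ put $D^N_c=\bigcup\set{I_m(r)\mid m\equiv c\pmod{N}}$. For a fixed $N$ the sets $D^N_0,\dots,D^N_{N-1}$ partition $\omega$, so exactly one of them lies in the ultrafilter $\mathcal V$; let $c(N)<N$ be its index. If $N$ divides $N'$ then every $D^{N'}_c$ is contained in one of the sets $D^N_{c'}$, from which $c(N)\equiv c(N')\pmod{N}$ whenever $N$ divides $N'$. Fixing a sequence $N_0,N_1,N_2,\dots$ with each $N_n$ dividing $N_{n+1}$ and cofinal in the divisibility preorder, say $N_n=(n+1)!$, and setting $D_n=D^{N_n}_{c(N_n)}$, this gives a decreasing sequence $D_0\supseteq D_1\supseteq\dots$ of members of $\mathcal V$. (Only that $\mathcal V$ is an ultrafilter has been used so far.)

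Next, using that $\mathcal V$ is a P-point, the countable family $\set{D_n\mid n\in\omega}\subseteq\mathcal V$ has a pseudointersection $Y\in\mathcal V$. Let $A=\set{m\mid Y\cap I_m(r)\neq\emptyset}$. Since $Y$ is infinite and every $I_m(r)$ is finite, $A$ is infinite; and since $Y\setminus D_n$ is finite while $I_m(r)\subseteq\omega\setminus D_n$ for every $m$ not congruent to $c(N_n)$ modulo $N_n$, for each $n$ all but finitely many $m\in A$ are congruent to $c(N_n)$ modulo $N_n$. Thus $A$ is an infinite subset of $\omega$ which, for every $n$, is contained modulo a finite set in a single residue class modulo $N_n$. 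On the other hand, since $\mathcal U\subseteq\mathcal V$ and $\mathcal V$ is an ultrafilter, $Y$ meets every member of $\mathcal U$ in an infinite set, and hence so does the larger set $\bigcup_{m\in A}I_m(r)$.

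The last and hardest step is to show that $p$ forces the opposite of this situation, namely: \emph{whenever $A\subseteq\omega$ is infinite and, for each $n$, contained modulo a finite set in one residue class modulo $N_n$, the set $\bigcup_{m\in A}I_m(\dot r)$ has finite intersection with some $U\in\mathcal U$.} Applying this to the set $A$ just constructed contradicts the fact that $\bigcup_{m\in A}I_m(r)$ meets every member of $\mathcal U$ infinitely, finishing the proof. I would prove the displayed statement by a density argument over $\mathbf{PS}\iter\dot{Q}$: given a condition $q\le p$ and a name $\dot A$ forced to be of the stated form, one strengthens $q$ so that it fixes a long block of consecutive intervals of $\dot r$ together with an initial segment of $\dot A$, the modulus $N_n$ past which $\dot A$ has been squeezed into one residue class, and that class; then, inside $V$, one chooses $U\in\mathcal U$ meeting only intervals of $\mathcal I(\dot r)$ whose index avoids that class, so that $U$ misses $\dot A$ beyond the decided segment. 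Two points need care. First, $\dot A$ and the relevant part of $\dot r$ live in $V[G]$ while $\mathcal U$ lives in $V$; to be able to choose $U$ in $V$ I would use the Sacks property of $\mathbf{PS}\iter\dot{Q}$ to capture an enumeration of $\dot A$, along with the function $n\mapsto c(N_n)$, inside ground-model slaloms, as in the definition of the Sacks property above. Second, the indices in $\mathcal I(\dot r)$ carried by the intervals a condition has already fixed are not the indices $k$ with $I_k(q)\subseteq\dom q$ but their shifted images $I_{k+n}(\dot r)$, and by Corollary~\ref{cor:help} the shift $n$ is not decided by $q$; Lemma~\ref{lem:shift} is exactly what neutralises this, by allowing the fixed intervals to be laid out so that for \emph{every} admissible shift the resulting indices avoid the prescribed residue class. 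Carrying out this bookkeeping uniformly against the name $\dot A$, which is handed to us by the uncontrolled ultrafilter $\mathcal V$, is the technical heart of the argument, and is where I expect the main difficulty to lie.
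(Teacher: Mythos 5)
Your opening reduction is essentially the paper's own sketch (the unlabeled Claim in the paper): pass from the generic ultrafilter $\mathcal V$ to a residue choice $c(N)$ (paper's $y(n)$), take a would-be pseudointersection, and try to kill it with a ground-model set $U\in\mathcal U$. That part is fine. The gap is in the last paragraph, which you flag as the technical heart but whose outline does not actually work.

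Your plan is: strengthen $q$ so that it decides the residue class of $\dot A$ and a block of intervals of $\dot r$, then \emph{choose $U\in\mathcal U$ meeting only intervals whose index avoids that class}. The problem is that there is no reason for such a set to lie in $\mathcal U$. You do not get to pick $\mathcal U$, and the set of integers lying in intervals of the ``good'' residue classes is a perfectly ordinary subset of $\omega$ that could very well fall outside $\mathcal U$. This is not a bookkeeping nuisance; it is precisely the obstruction the paper's proof is built around, and it is why the proof of Proposition~\ref{prop:kill} contains an ultrafilter dichotomy. After fixing $p_1$, an interval partition $\mathcal A$, and ground-model slaloms $X_n$ from the Sacks property, the paper forms a ``potential hit set'' $H=\bigcup_n H_n$ computable in $V$ from $p_1$, $\mathcal A$ and $X_n$, and then asks \emph{in $V$} whether $H\in\mathcal U$. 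If $H\notin\mathcal U$, take $U=\bigcup_n (A_{2n+1}\setminus H_n)\in\mathcal U$ and argue directly that $\dot Z$ lands in $\bigcup H_n$. If $H\in\mathcal U$, you cannot avoid $H$; instead you take $U=H$ itself, and then apply Lemma~\ref{lem:shift} to pick shifts $s_n$ and a condition $p_2\le p_1$ that \emph{moves} where $\dot Z$ can land, into a set $\bar H_n$ disjoint from $H_n$. Your description of the shift lemma's role (``so that for every admissible shift the resulting indices avoid the prescribed residue class'') is not what happens: the lemma is used once, to pick a single shift $s_n$ per block so that $C_n\cap(C_n-_{m(n)}\{s_n\})=\emptyset$, and it is invoked only in the second branch of the dichotomy, exactly to compensate for $H$ being forced into $\mathcal U$.

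A second, smaller, issue: the parameters need to be tuned for the shift lemma to apply. You propose $N_n=(n+1)!$; the paper instead needs $m(n)=k\bigl((n+1)(v(n)+2)\bigr)$ (growing much faster), where $(n+1)(v(n)+2)$ bounds $\lvert C_n\rvert$ — the size $n{+}1$ of the slalom $X_n$ times the number $v(n)+2$ of possible shifts accumulated over the first $n$ blocks. Without this calibration Lemma~\ref{lem:shift} has nothing to bite on. So the overall architecture of your argument is sound and consonant with the paper's, but the density step as written would not close, and the mechanism you attribute to the shift lemma is not the one that makes the proof go through.
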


Before giving the formal the proof of the Proposition, 
let us sketch the core idea of the argument. 
The basic approach is the same as in the no-P-points proof of Shelah from~\cite{Shelah-book}. 
We will show that in order to extend $\mathcal{U}$ to an ultrafilter in the generic extension, 
one would need to add to $\mathcal U$ a particular countable set $\mathcal D$ 
of newly introduced subsets of $\omega$, and at the same time there is no way to 
add to $\mathcal U$ also the pseudointersection of $\mathcal D$; 
for each pseudointersection $Z$ of $\mathcal D$ there is $U \in \mathcal U$ 
such that $Z \cap U = \emptyset$.

The sets in $\mathcal D$ will be chosen as the typical independent reals 
added by the Silver forcing. Let $r$ be the generic real, 
define $d^n_i$ as the union of intervals ${I}_j(r)$ such that
$j =_{n} i$. 
Although it is easy to see that each such $d^n_i$ is an independent real, 
this fact will not be explicitly needed in our argument  
and is therefore left for the interested reader to observe. 
For a fixed $n$ the sets $d^n_i$ form a partition of $\omega$ into $n$ pieces, 
and consequently each ultrafilter extending $\mathcal U$ needs to contain 
one element of this partition, denote this set $d^n_{y(n)}$. 
We will show that the set $\mathcal D = \os d^n_{y(n)} \mid n \in \omega \cs$ 
works as desired.

The argument for non-existence of pseudointersections will go 
along the lines of the following simple claim.

\begin{claim*}
Suppose $\mathcal D = \os d^n \mid n \in \omega\cs$ is a subset 
of an ultrafilter $\mathcal U$ with the following property. 
For every function $f\colon \omega \to \omega$ there is an interval partition 
$\os a_n \mid n \in \omega \cs$ such that 
\begin{itemize}
 \item $f(n) < \min a_{n+1}$ for each $n \in \omega$, and
 \item $\bigcup \os a_n \cap d^n \mid n \in \omega \cs \notin \mathcal U$.
\end{itemize}
Then $\mathcal D$ does not have a pseudointersection in $\mathcal U$, 
and consequently $\mathcal U$ is not a P-point.
\end{claim*}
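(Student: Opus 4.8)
The plan is a proof by contradiction. Assume that $Z\in\mathcal U$ is a pseudointersection of $\mathcal D$, so that $Z\setminus d^{n}$ is finite for every $n\in\omega$. From this data I want to manufacture a single set $W\notin\mathcal U$ which nonetheless contains all but finitely many elements of $Z$; since $\mathcal U$ is an ultrafilter this yields $Z\setminus W\in\mathcal U$, and a non-principal ultrafilter contains no finite set, which is the desired contradiction.

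Concretely, I first define $f\colon\omega\to\omega$ by $f(n)=1+\max\bigl(Z\setminus d^{n+1}\bigr)$, with the convention $\max\emptyset=0$; this is legitimate precisely because each $Z\setminus d^{n+1}$ is finite, and $f$ records how far out along $\omega$ one must go before $Z$ is completely captured by $d^{n+1}$. Applying the hypothesis of the claim to this $f$ produces an interval partition $\os a_n\mid n\in\omega\cs$ with $f(n)<\min a_{n+1}$ for all $n$ and $W:=\bigcup\os a_n\cap d^n\mid n\in\omega\cs\notin\mathcal U$. The point of the first bullet is exactly an index shift: if $k\in Z$ lies in the block $a_m$ for some $m\ge 1$, then $k\ge\min a_m>f(m-1)>\max(Z\setminus d^{m})$, hence $k\notin Z\setminus d^{m}$, i.e.\ $k\in d^{m}$, so that $k\in a_m\cap d^m\subseteq W$. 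Therefore every element of $Z$ outside $W$ must lie in the block $a_0$, which is a bounded initial interval; so $Z\setminus W$ is finite. Now $\omega\setminus W\in\mathcal U$ because $W\notin\mathcal U$, whence $Z\setminus W=Z\cap(\omega\setminus W)\in\mathcal U$, which is impossible for a non-principal ultrafilter. (The hypothesis does force $\mathcal U$ to be non-principal: if $\mathcal U$ were concentrated at a point $a$, then $a\in d^n$ for every $n$ since $d^n\in\mathcal U$, yet the hypothesis applied to the constant function $0$ gives a $W\notin\mathcal U$, that is $a\notin W$, contradicting $a\in a_m\cap d^m\subseteq W$ for the block $a_m$ containing $a$.) This contradiction shows $\mathcal D$ has no pseudointersection in $\mathcal U$; since $\mathcal D$ is a countable subset of $\mathcal U$, by definition $\mathcal U$ is not a P-point.

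I do not expect a genuine obstacle here, as the argument is elementary bookkeeping. The only place demanding care is aligning the indices, which is why $f$ is defined through $d^{n+1}$ rather than $d^{n}$: this makes the guarantee $f(n)<\min a_{n+1}$ translate into ``the block $a_m$ is swallowed by $d^m$'' for every $m\ge 1$. One should also make explicit the otherwise silent observation that the hypothesis cannot be satisfied by a principal ultrafilter, which is what turns ``$Z\setminus W$ finite yet in $\mathcal U$'' into an actual absurdity rather than merely a proof that $\mathcal U$ is principal.
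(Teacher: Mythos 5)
The paper states this claim only as a motivating sketch and gives no proof of its own, so there is nothing to compare against; your argument is the natural one and it is correct. The index shift in the definition of $f$ (recording $\max(Z\setminus d^{n+1})$ rather than $\max(Z\setminus d^{n})$) is exactly what is needed to align the guarantee $f(n)<\min a_{n+1}$ with the desired containment $Z\cap a_m\subseteq d^m$ for $m\geq 1$, leaving $Z\setminus W\subseteq a_0$, which is finite; combined with $W\notin\mathcal U$ and $Z\in\mathcal U$ this gives the contradiction. Your side remark that the hypothesis itself rules out principal $\mathcal U$ (via the constant-zero function) is a correct and slightly pedantic but worthwhile addition, since the claim as written does not say ``non-principal,'' although every $\mathcal U$ considered in the paper is.
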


Although the details of the sketched idea will be for technical reasons somewhat 
adjusted, e.g.\ we will use only a subset of the above defined set $\mathcal D$,
the formal proof of Proposition~\ref{prop:kill} 
will roughly follow the described argument.

\begin{proof}
    First we use the function $k$ from Lemma~\ref{lem:shift}
    to inductively construct two increasing sequences of integers.
    Put $v(0) = 0$ and $m(0) = k(2)$.
    Assume $v(n-1)$, $m(n-1)$ are defined,
	put $v(n) = \sum \os m(i) \mid i \in n \cs $
    and $m(n) = k\big((n+1) (v(n) + 2)\big)$.
    Let $r$ be the $\mathbf{PS}$ generic real in $V[G]$
    added by the first stage of the iteration.
    For $n \in \omega$ and $i \in m(n)$ let
    	\[ D^n_i = \bigcup \os I_j(r) \mid j \in \omega, j =_{m(n) } i \cs.\]
    For a fixed $n$ the set $\os D_{i}^{n}\mid i< m (n)  \cs$ 
    is a finite partition of $\omega$.
    We will show that in $V[G]$, for every function
    $y \colon \omega \to \omega$ which satisfies $y (n) <m(n)$ for every $n\in\omega$, 
    and every pseudointersection $Z$ of
    $\os D^n_{y(n)} \mid n \in \omega \cs$ there is a set $U \in \mathcal U$
    such that $U \cap Z = \emptyset$.
    This implies that $\mathcal U$ cannot be extended to a P-point in $V[G]$.

    Let $(p, \dot q)$ be any condition in $\mathbf{PS} \iter \dot{Q}$,
    and let $\dot Z$, $\dot y$ be the corresponding names for $Z$ and $y$.
    Utilizing the Sacks property we can assume that
    there are $f \colon \omega \to \omega$ and
    $\os X_n \in {[m(n)]}^{n+1} \mid n \in \omega \cs$ in $V$
    such that
    	\[(p, \dot q) \Vdash \mleft(\dot Z \setminus f(n)\mright) \subseteq D^n_{\dot{y}(n)}
    	\text{ \ and \ } \dot{y}(n) \in X_n.\]

    Choose an interval partition
    $\mathcal A = \os A_n \mid n \in \os -1 \cs \cup \omega \cs$
    of $\omega$ ordered in the natural way such that
    \begin{enumerate}
        \item $f(n) < \min A_{2n}$ for each $n \in \omega$,
        \item $m(n) < \card{A_{2n+j} \cap \cod p}$ for each $n \in \omega$, $j \in 2$, and
        \item $\mathcal I(p)$ refines $\mathcal A$.
    \end{enumerate}
    We will assume that $U_0 = \bigcup \os A_{2n+1} \mid n \in \omega\cs \in \mathcal U$,
    otherwise take the interval partition
    $\mathcal A' = \la A_{-1} \cup A_0, A_1, A_2, \ldots \ra$ instead.\footnote{
    In the following proof, we will use the second assumption on the interval partition $\mathcal A$
    only for $j = 0$. Notice however, that assuming it only for $j=0$ at the moment of 
    choosing $\mathcal A$ would not have been sufficient as if it were the case that 
    $U_0 \notin \mathcal U$, we would be working with the partition $\mathcal A'$ instead, 
    and $\mathcal A'$ would not be fulfilling the necessary requirement. 
    The observant reader may also notice that the last assumption on $\mathcal A$ 
    will in fact not be necessary in the proof.
    }
    The plan is to use the trace of extensions of $p$ on the interval $A_{2n}$
    to control the possible behavior of the set 
    $D^n_{ {y(n)}} \cap A_{2n+1}$ for all $n\in\omega$ simultaneously.

    Let $p_1 \in \mathbf{PS}$ be any extension of $p$ such that
    $A_{2n-1} \subseteq \dom p_1$ and
    $\card{\cod p_1 \cap A_{2n}} = m(n)$ for each $n \in \omega$.    
    Note that for any $j \in\omega$ if $I_{j}(p_1) \subseteq A_{2n-1}$,
    then $p_1 \Vdash I_{j}(p_1) \in \mathcal{I}(\dot{r})$. 
    Also note that $\card{ \cod p_1 \cap \min A_{2n} } = v(n)$ for each $n\in\omega$.
	Let
	    \[
	    C_n =  X_n -_{m(n)} \os i \mid i \in v(n) + 2 \cs
	    \]
    and notice that $\card{C_n} \leq (n+1) (v(n) + 2)$.
    For $n\in \omega$ put
	    \[H_n = A_{2n+1} \cap
	    \bigcup \os I_j(p_1) \mid j \in \omega, j \in_{m(n)} C_n \cs .\]

    We will now distinguish two cases. Case~1;
    $\bigcup \os H_n \mid n \in \omega \cs \notin \mathcal U$,
    hence $U = \bigcup \os A_{2n+1} \setminus H_n \mid n \in \omega \cs \in \mathcal U$.
    Pick any $p_2 < p_1$, $p_2 \in \mathbf{PS}$ such that
    $p_2^{-1}(1) = p_1^{-1}(1)$ and
    $\card{\cod p_2 \cap A_{2n}} = 1$ for each $n \in \omega$. 
    Notice that $\mathcal{I}(p_1) = \mathcal{I}(p_2)$, 
    $\card{\cod p_2 \cap \min(A_{2n+1})} = n+1$,      
    and if $j\in\omega$ is such that 
    $I_{j}(p_{2}) \subseteq A_{2n+1}$, then
    $I_{j}(p_{2}) \subseteq \dom p_2$.
    For each $n \in \omega$ Corollary~\ref{cor:help} states that $p_2$ forces: 
    There is $i \leq n+1$ such that for each $j \in \omega$ 
    if $I_j(p_1) \subseteq A_{2n+1}$, 
    then $I_j(p_1) = I_{j + i}(\dot{r})$. 
    As $(p_2, \dot{q})$ forces $\dot{y}(n) \in X_n$,
    it follows that if $I_j(p_1) = I_{j + i}(r) \subseteq D^n_{ y(n) } \cap A_{2n+1}$,
    then $j \in_{m(n)}  \mleft( X_n -_{m(n)} \os i \mid i \in n+2 \cs \mright)
    \subseteq C_n$.
    We can conclude that:
		\[(p_2,\dot q) \Vdash D^n_{ \dot{y}(n) } \cap A_{2n+1} \subset H_n. \]
    This together with
    	\[ (p,\dot q) \Vdash (\dot Z \setminus \min A_{2n}) \subseteq D^n_{\dot{y}(n)}  \]
    implies that $(p_2,\dot q) \Vdash \dot{Z} \cap U = \emptyset$.

    Case~2; $U = \bigcup \os H_n \mid n \in \omega \cs \in \mathcal U$.
    Applying Lemma~\ref{lem:shift},
    for each $n \in \omega$ there exists $s_n \in m(n)$
    such that $C_n \cap \mleft(C_n -_{m(n)} \os s_n \cs \mright) = \emptyset$. 
    Put $t(n) =  \sum \os s_i \mid i \in n \cs \leq v(n) - n$. 
    Pick a condition $p_2 < p_1$, $p_2 \in \mathbf{PS}$ such that
    $\card{\cod p_2 \cap A_{2n}} = 1$ and
    	\[ \card{p_2^{-1}(1) \cap A_{2n}} = \card{p_1^{-1}(1) \cap A_{2n}} + s_n \]
    for each $n \in \omega$.
    Such $p_2$ exists as $\card{\cod p_1 \cap A_{2n}} = m(n)$. 
    Note that in this case 
    $\card{\cod p_2 \cap \min(A_{2n+1})} = n+1$,      
    and if $j\in\omega$ is such that 
    $I_{j}(p_{2}) \subseteq A_{2n+1}$, then
    $I_{j}(p_{2}) \subseteq \dom p_2$ 
    and $I_{j}(p_{2}) = I_{j - t(n+1)}(p_{1})$.
    For each $n \in \omega$ Corollary~\ref{cor:help} implies that $p_2$ forces:
    There is $i \leq n+1$ such that for each $j \in \omega$ 
    if $I_j(p_1) \subseteq A_{2n+1}$, 
    then $I_j(p_1) = I_{j + t(n+1) + i}(\dot{r})$. 
    As $(p_2, \dot{q})$ forces $\dot{y}(n) \in X_n$,
    it follows that if $I_j(p_1) = I_{j + t(n+1) +i }(r) \subseteq D^n_{ y(n) } \cap A_{2n+1}$,
    then \begin{multline*}
	  j \in_{m(n)}  
	  \mleft( X_n -_{m(n)} \os t(n+1) \cs \mright) -_{m(n)} \os i \mid i \in n+2 \cs = \\
	  = \mleft(\mleft( X_n -_{m(n)} \os t(n) \cs \mright) 
	  -_{m(n)} \os i \mid i \in n+2 \cs \mright) -_{m(n)} \os s_n \cs \subseteq \\
	  \subseteq \mleft( X_n -_{m(n)} \os i \mid i \in v(n) + 2 \cs\mright) -_{m(n)} \os s_n \cs
	  = C_n -_{m(n)} \os s_n \cs.
	  \end{multline*}    
    
    For $n \in \omega$ put
	    \[\bar{H}_n = A_{2n+1} \cap \bigcup \os I_j(p_1) \mid j \in \omega,
	    j \in_{m(n)} \mleft(C_n -_{m(n)} \os s_n \cs\mright) \cs, \]
    $H_n \cap \bar{H}_n = \emptyset$, because if $j \in_{m(n)} C_n $, 
    then $j \notin_{m(n)} C_n -_{m(n)} \os s_n \cs$.

    Now \[(p_2,\dot q) \Vdash D^n_{\dot{y}(n)} \cap A_{2n+1} \subset \bar{H}_n. \]
    Again, together with
    	\[ (p,\dot q) \Vdash (\dot Z \setminus \min A_{2n}) \subseteq D^n_{\dot{y}(n)} \]
    we get $(p_2,\dot q) \Vdash \dot{Z} \cap U = \emptyset$.
\end{proof}


The Silver model is the result of a countable support iteration
of Silver forcing of length~$\omega_2$.

\begin{theorem}\label{thm:Silver-iteration}
    There are no P-points in the Silver model.
\end{theorem}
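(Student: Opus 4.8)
The plan is to reduce the statement about the Silver model to a single application of Proposition~\ref{prop:kill}, using a standard reflection argument for countable support iterations of definable proper forcings. Let $\langle P_\alpha, \dot Q_\alpha \mid \alpha < \omega_2\rangle$ be the countable support iteration of Silver forcing, let $G$ be $P_{\omega_2}$-generic over $V$, and work in $V[G]$. Suppose toward a contradiction that $\mathcal W$ is a P-point in $V[G]$. Since $\mathbf{PS}$ is proper and has the Sacks property, the whole iteration $P_{\omega_2}$ is proper and has the Sacks property; in particular it is $\omega^\omega$-bounding and adds no new reals at stages of uncountable cofinality, so by the usual $\Delta$-system / reflection argument for countable support iterations of length $\omega_2$, there is $\alpha < \omega_2$ of uncountable cofinality such that $\mathcal W \cap V[G_\alpha]$ is already an ultrafilter on $\omega$ in $V[G_\alpha]$ — indeed, every subset of $\omega$ in $V[G]$ appears in some $V[G_\beta]$ with $\beta < \omega_2$, and by a skolem-hull argument one finds $\alpha$ closed under the (countably many relevant) ``decision'' functions so that $\mathcal W \cap V[G_\alpha]$ decides membership of every real in $V[G_\alpha]$. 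Set $\mathcal U = \mathcal W \cap V[G_\alpha]$, a non-principal ultrafilter in the intermediate model $V[G_\alpha]$.

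Next I would split the tail of the iteration at the successor step $\alpha+1$. Write $V' = V[G_\alpha]$, let $r$ be the Silver real added by $\dot Q_\alpha$, and let $\dot Q$ be the $P_{\omega_2}/G_{\alpha+1}$-tail computed over $V'[r]$, so that $V[G] = V'[G']$ where $G'$ is generic for $\mathbf{PS} \iter \dot Q$ over $V'$. The iteration tail $\mathbf{PS}\iter\dot Q$ has the Sacks property because it is (forcing-equivalent to) a further piece of the countable support iteration of Silver forcing, hence still proper with the Sacks property. Now Proposition~\ref{prop:kill}, applied in $V'$ to the ultrafilter $\mathcal U$ and the tail forcing $\mathbf{PS}\iter\dot Q$, tells us that $\mathcal U$ cannot be extended to a P-point in $V'[G'] = V[G]$. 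But $\mathcal W$ is a P-point in $V[G]$ and $\mathcal W \supseteq \mathcal U$ — a contradiction.

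The main obstacle is the reflection step: verifying that some $\alpha < \omega_2$ of uncountable cofinality has $\mathcal W \cap V[G_\alpha]$ an ultrafilter in $V[G_\alpha]$. This is where properness, the $\omega_2$-length, and the $\aleph_2$-c.c.\ (which holds under $\mathsf{CH}$ in the ground model for such iterations) enter: one builds a continuous increasing chain of elementary submodels of a large $H(\theta)$ and takes $\alpha$ to be the sup of the ordinals in one of them at a point of cofinality $\omega_1$; since new reals are added cofinally but the iteration is $\omega^\omega$-bounding, the ultrafilter $\mathcal W$ restricted to the reals of $V[G_\alpha]$ is forced by a name living in $V[G_\alpha]$, and elementarity makes it an ultrafilter there. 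The remaining point — that the tail forcing still has the Sacks property — is routine, since iterating a forcing with the Sacks property under countable support preserves it. Note also that the argument in fact shows more: no ground-model (or intermediate-model) ultrafilter generates a P-point, which is the phrasing the abstract emphasizes.
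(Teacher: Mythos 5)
Your proposal is correct and follows essentially the same route as the paper: reflect to find an intermediate stage $\alpha<\omega_2$ where the restriction of the putative P-point is already an ultrafilter, factor the remaining forcing as $\mathbf{PS}\iter\dot Q$ with $\dot Q$ a tail of the iteration (hence Sacks-property preserving), and invoke Proposition~\ref{prop:kill}. The paper's proof is just a terser rendering of the same reflection-plus-tail-factorization argument.
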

\begin{proof}
    Denote by $\mathbf{PS}_{\alpha}$ the countable support iteration
    of Silver forcing of length $\alpha$ for $\alpha\leq\omega_{2}$.
    Assume $V$ is a model of $\mathsf{CH}$
    and let $G \subset \mathbf{PS}_{\omega_2}$ be a generic filter.
    Let $\mathcal{U} \in V[G]$ be a non-principal ultrafilter.
    For $\alpha < \omega_2$ let
    $\mathcal{U}_{\alpha} = \mathcal{U} \cap V[G_\alpha]$,
    where $G_\alpha$ is the restriction of $G$ to $\mathbf{PS}_\alpha$.
    By the standard reflection argument, there is $\alpha < \omega_2$
    such that $\mathcal{U}_\alpha \in V[G_\alpha]$
    and it is an ultrafilter in that model.
    Since the next step of the iteration adds a Silver real
    and the tail of the iteration has the Sacks property,
    Proposition~\ref{prop:kill} states
	that $\mathcal{U}_\alpha$ cannot be extended to a P-point
    in $V[G]$, in particular, $\mathcal{U}$ is not a P-point.
\end{proof}

We show that forcing with the side-by-side product of Silver forcing
also produces a model without P-points.

\begin{theorem}\label{thm:Silver-product}
	Assume $\mathsf{GCH}$,
	let $\kappa > \omega_1$ be a cardinal with uncountable cofinality.
	If $\bigotimes_\kappa \mathbf{PS}$ is the countable support product
	of $\kappa$ many Silver posets and
	$G \subset \bigotimes_\kappa \mathbf{PS}$ is a generic filter,
	then \[V[G] \models \text{ there are no P-points and } \mathfrak{c} = \kappa.\]
\end{theorem}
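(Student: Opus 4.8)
The plan is to mimic the proof of Theorem~\ref{thm:Silver-iteration}, replacing the countable support iteration with the countable support product $\bigotimes_\kappa \mathbf{PS}$ and using the same reflection strategy. First I would record the standard facts about this product forcing under $\mathsf{GCH}$: it is proper, has the Sacks property (being a product of Sacks-property forcings with countable support), and it forces $\mathfrak c = \kappa$ since it has size $\kappa^{\omega} = \kappa$, is $\omega^\omega$-bounding (so adds no reals making $\mathfrak c$ larger by dominating arguments) and adds $\kappa$ many mutually distinct Silver reals. The $\aleph_2$-cc-type chain condition for such products under $\mathsf{GCH}$ guarantees cardinals are preserved and that nice names for reals are supported on countable subsets of $\kappa$.

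The heart of the argument is the reflection. Let $G \subset \bigotimes_\kappa \mathbf{PS}$ be generic and $\mathcal U \in V[G]$ a non-principal ultrafilter. For a set $a \subseteq \kappa$ write $G_a$ for the restriction of $G$ to the coordinates in $a$, and put $\mathcal U_a = \mathcal U \cap V[G_a]$. Since each element of $\mathcal U$ has a name with countable support, and using the fact that $\kappa$ has uncountable cofinality, a standard closing-off argument produces a set $a \in {[\kappa]}^{\omega_1}$ (or even of size $\aleph_1$, closed under the support functions) such that $\mathcal U_a \in V[G_a]$ and $\mathcal U_a$ is a non-principal ultrafilter in $V[G_a]$. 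Here I would be careful to iterate the closing-off $\omega_1$ many times: start with any countable $a_0$, at each step add the supports of names witnessing ultrafilter-ness for the subsets of $\omega$ appearing in $V[G_{a_\xi}]$, and take $a = \bigcup_{\xi < \omega_1} a_\xi$; uncountable cofinality of $\kappa$ ensures $a \neq \kappa$, indeed $\card{\kappa \setminus a} = \kappa$.

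Now pick any coordinate $\beta \in \kappa \setminus a$. The product factors as $\bigotimes_\kappa \mathbf{PS} \cong \left(\bigotimes_a \mathbf{PS}\right) \times \mathbf{PS}_\beta \times \left(\bigotimes_{\kappa \setminus (a \cup \{\beta\})} \mathbf{PS}\right)$, so over the model $V[G_a]$ the remaining forcing is $\mathbf{PS}_\beta \times \mathbf{R}$, where $\mathbf{R}$ is a countable support product of Silver posets and hence has the Sacks property over $V[G_a]$. Thus $\mathbf{PS} \iter \dot{\mathbf R}$ (with $\dot{\mathbf R}$ a $\mathbf{PS}$-name for $\mathbf R$) has the Sacks property, the next coordinate adds a Silver real over $V[G_a]$, and Proposition~\ref{prop:kill} applies to $\mathcal U_a$: it cannot be extended to a P-point in $V[G_a][G_{\kappa \setminus a}] = V[G]$. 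Since $\mathcal U \supseteq \mathcal U_a$, the ultrafilter $\mathcal U$ is not a P-point, and as $\mathcal U$ was arbitrary, $V[G] \models$ there are no P-points. The main obstacle is the reflection step — verifying that the restricted product really is a product in the factored sense, that $\mathcal U_a$ genuinely lands in $V[G_a]$ as an ultrafilter (rather than merely a filter), and that the tail forcing over $V[G_a]$ still has the Sacks property; all of these are routine for countable support products under $\mathsf{GCH}$ but deserve an explicit sentence.
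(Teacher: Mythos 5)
Your strategy differs from the paper's. The paper does \emph{not} reflect $\mathcal U$ to an intermediate model: instead, it uses $\omega_2$-cc to find a set $J \subset \kappa$ of size $\omega_1$ such that for every ground-model set $A \subseteq \omega$ and every condition $q$, the statement $A \in \mathcal U$ is decided by a condition with support in $J$ compatible with $q$. It then picks $\alpha \notin J$ and \emph{re-runs} the argument of Proposition~\ref{prop:kill}, because all the intermediate objects ($U_0$, $H$, the conditions $p_1$, $p_2$) lie in $V$ and their $\mathcal U$-membership can be decided on the fly by conditions not touching coordinate~$\alpha$. You instead close off to a set $a$ of size $\omega_1$ with $\mathcal U_a := \mathcal U \cap V[G_a] \in V[G_a]$ an ultrafilter, and then try to invoke Proposition~\ref{prop:kill} as a black box over $V[G_a]$.

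This black-box application has a genuine gap. After factoring, the forcing at coordinate $\beta$ over $V[G_a]$ is $\mathbf{PS}^V$, \emph{not} $\mathbf{PS}^{V[G_a]}$, and $\mathbf{PS}^V$ is not even dense in $\mathbf{PS}^{V[G_a]}$ (a condition whose graph codes a new real of $V[G_a]$ has no extension in $V$). Proposition~\ref{prop:kill} is stated for the Silver forcing of the ground model, and its proof crucially builds conditions $p_1, p_2$ from the interval partition $\mathcal A$, which in turn is constructed from the function $f$ produced by the Sacks property over the relevant ground model. If you apply the Sacks property over $V[G_a]$, you get $f \in V[G_a]$, so $\mathcal A$ and hence $p_1, p_2$ may lie in $V[G_a] \setminus V$ --- and then they are simply not conditions of the poset you are forcing with. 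This can be repaired: $\bigotimes_a \mathbf{PS}$ is $\omega^\omega$-bounding over $V$, so $f$ is dominated by some $g \in V$, and one may choose $\mathcal A$ and then $p_1, p_2$ in $V$ using $g$ in place of $f$. But once you make this adjustment you are no longer applying Proposition~\ref{prop:kill} verbatim; you are re-running its proof with modifications, which is exactly what the paper does. Your reflection step is plausible (the closing-off under $\omega_2$-cc does keep $|a| = \omega_1$, and uncountable cofinality of $\kappa$ leaves room to pick $\beta \notin a$), and it proves somewhat more than the paper's $J$-lemma, but the step ``Proposition~\ref{prop:kill} applies'' needs to be replaced by an explicit reprise of its proof in which the $V$-versus-$V[G_a]$ distinction for Silver conditions is addressed.
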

\begin{proof}
	It is well known that under $\mathsf{GCH}$ the poset
	$\bigotimes_\kappa \mathbf{PS}$ is an $\omega_2$-c.c.\ proper forcing notion,
	has the Sacks property (see e.g.~\cite{Koszmider-Sforcing}),
	and $V[G] \models \mathfrak{c} = \kappa$.
	Assume $\mathcal U$ is an ultrafilter in $V[G]$.
	Since $\bigotimes_\kappa \mathbf{PS}$ is $\omega_2$-c.c.,
	there is $J \subset \kappa$ of size $\omega_1$
	such for every $A \in \pw{\omega} \cap V$
	and $q \in \bigotimes_\kappa \mathbf{PS}$ 
	the statement $A \in \mathcal U$
	is decided by a condition
	with support contained in $J$
	and compatible with $q$.
	Choose $\alpha \in \kappa \setminus J$ and let $r$
	be the $\mathbf{PS}$ generic real added by the $\alpha$-th coordinate
	of the product.

	The theorem is now proved in the same way as Proposition~\ref{prop:kill};
	as the proof follows most parts of the proof 
	of Proposition~\ref{prop:kill} in verbatim, 
	we will focus in detail only on the points where adjustments are necessary. 
	
	Start with defining the functions $v$ and $m$, consider sets $D_i^n$ defined from
	the generic real $r$,
	and pick any $\bigotimes_\kappa \mathbf{PS}$ names $\dot{Z}, \dot{y}$. 
	Let $(p,q)$ be any condition in 
	$\mathbf{PS} \times { \bigotimes_{\kappa \setminus \os \alpha \cs}
	\mathbf{PS} } = \bigotimes_\kappa \mathbf{PS}$ 
	which forces that $\mathcal U$ is a non-principal ultrafilter; 
	we interpret $p \in \mathbf{PS}$ as the $\alpha$-th coordinate 
	and $q$ as the other coordinates of a condition in the full product poset.
	We invoke the Sacks property of $\bigotimes_\kappa \mathbf{PS}$ 
	to assume the existence of an appropriate function $f$ and 
	a sequence $\os X_n \mid n \in \omega \cs$. 
	Choose the interval partition $\mathcal A$ satisfying properties (1--3)
	with respect to $p$ and consider 
	$U_0 = \bigcup \os A_{2n+1} \mid n \in \omega\cs$.
	As $U_0 \in V$, there is a condition $(p,q_1) < (p, q)$ deciding 
	whether $U_0$ is an element of $\mathcal U$, 
	because of the choice of coordinate $\alpha$.
	We will assume that $(p,q_1) \Vdash U_0 \in \mathcal U$, 
	otherwise take the interval partition $\mathcal A'$ instead. 
	Follow with choosing the condition $p_1$ extending $p$, 
	define the sets $C_n$ and $H_n$ for $n \in \omega$. 
	
	Now consider the set $H = \bigcup \os H_n \mid n \in \omega \cs$. 
	As $H \in V$, there is $(p_1, q_2) < (p_1, q_1)$ deciding whether $H \in \mathcal U$. 
	Case~1; $(p_1, q_2) \Vdash H \notin \mathcal U$. 
	Now proceed again in verbatim as in case~1 
	of the proof of Proposition~\ref{prop:kill};
	define $U$, choose $p_2 < p_1$, 
	and conclude that $(p_2, q_2) \Vdash \dot{Z} \cap U = \emptyset$.
	
	Case~2; $(p_1, q_2) \Vdash H \in \mathcal U$.
	Proceed again as in case~2 
	of the proof of Proposition~\ref{prop:kill};
	define $U$, find $s_n$ for each $n \in \omega$, and choose $p_2 < p_1$.
	And finally conclude $(p_2, q_2) \Vdash \dot{Z} \cap U = \emptyset$.
\end{proof}

\section*{Concluding remarks}

\noindent
Theorem~\ref{thm:Silver-iteration}
can be stated in an axiomatic manner.
Recall that $\mathcal N$ denotes the ideal of Lebesgue null sets
and let $v_0$ be the ideal associated with the Silver forcing;
\[v_0 = \os A \subset 2^\omega \mid \forall p \in \mathbf{PS} \
\exists q \in \mathbf{PS}, q < p, [q] \cap A = \emptyset \cs.\]
This ideal was introduced in~\cite{CRSW} and studied in~\cite{Brendle-paradise, Prisco-Henle}. 
The proof of Proposition~\ref{prop:kill} can be reformulated to yield the following theorem, 
the detailed proof is provided in~\cite{Osvaldo-thesis}.
\begin{theorem}\label{thm:axiomatic-version}
The inequality $\cof \mathcal N < \cov v_0$ implies that there are no \mbox{P-points}.
\end{theorem}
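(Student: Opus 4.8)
The plan is to reformulate the combinatorial core of the proof of Proposition~\ref{prop:kill} in a forcing-free way, reading off both the Silver real and the data that there came from the Sacks property from the hypothesis $\cof\mathcal N<\cov v_0$. So assume $\cof\mathcal N<\cov v_0$ and, towards a contradiction, let $\mathcal U$ be a P-point. I keep the functions $v$ and $m$ and, for each $r\in 2^\omega$ with $r^{-1}(1)$ infinite, the sets $D^n_i(r)=\bigcup\{\,I_j(r)\mid j=_{m(n)}i\,\}$ from the proof of Proposition~\ref{prop:kill}; for every $n$ the family $\{\,D^n_i(r)\mid i<m(n)\,\}$ is a finite partition of $\omega$, so any ultrafilter extending a fixed one must contain exactly one of its members.

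First I would replace the appeal to the Sacks property by $\cof\mathcal N$. By Bartoszyński's characterisations of $\cof\mathcal N$ (both $\mathfrak d\le\cof\mathcal N$ and the fact that $\cof\mathcal N$ is the least size of a family of slaloms of width $n+1$ localising every element of $\omega^\omega$) one fixes a family $\mathcal T$ of size at most $\cof\mathcal N$ of pairs $(f,\vec X)$, $f\in\omega^\omega$ and $\vec X=\langle X_n\mid n\in\omega\rangle$ with $X_n\in{[m(n)]}^{n+1}$, such that for every $g\in\omega^\omega$ and every $y\in\prod_n m(n)$ there is $(f,\vec X)\in\mathcal T$ with $g\le^* f$ and $y(n)\in X_n$ for all but finitely many $n$. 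These pairs take over the role of the $f$ and $\langle X_n\rangle$ produced in the proof of Proposition~\ref{prop:kill} through the Sacks property.

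Next, for each $(f,\vec X)\in\mathcal T$ I would define a set $A_{f,\vec X}\subseteq 2^\omega$ of reals that are \emph{not generic enough for $(f,\vec X)$}, arranged so that whenever $r\notin A_{f,\vec X}$ with $r^{-1}(0),r^{-1}(1)$ infinite, $y\in\prod_n m(n)$ satisfies $y(n)\in X_n$ for large $n$, and $Z$ satisfies $Z\setminus f(n)\subseteq D^n_{y(n)}(r)$ for large $n$, then there is $U\in\mathcal U$ with $U\cap Z=\emptyset$. The definition of $A_{f,\vec X}$ simply transcribes the construction in the proof of Proposition~\ref{prop:kill}: one chooses the interval partition $\mathcal A$ (with $\min A_{2n}>f(n)$, now also with $\mathcal I(r)$ refining $\mathcal A$ and each $A_{2n}$ meeting both $r^{-1}(0)$ and $r^{-1}(1)$ in at least $m(n)$ points), forms the auxiliary conditions $p_1$ and — according to whether the resulting set $H$ lies in $\mathcal U$ — $p_2$, using Lemma~\ref{lem:shift} in the second case exactly as before, and puts into $A_{f,\vec X}$ those $r$ for which the procedure breaks down. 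The crucial point is that $A_{f,\vec X}\in v_0$: given any $p\in\mathbf{PS}$, running the construction of Proposition~\ref{prop:kill} with $p$ as starting condition, the fixed data $(f,\vec X)$, and the fixed ultrafilter $\mathcal U$ deciding the case distinction, produces an extension $q\le p$ — the analogue of $p_2$ — such that every $r\in[q]$ lies outside $A_{f,\vec X}$; this is the observation that, for fixed $(f,\vec X)$, the conclusion "$(p_2,\dot q)\Vdash\dot Z\cap U=\emptyset$" proved there is really a statement about the generic real alone. The whole modular bookkeeping — $k(n)$, $v$, $m$, the sets $C_n$, $H_n$, $\bar H_n$ and the disjointness $H_n\cap\bar H_n=\emptyset$ — carries over verbatim.

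To finish, the sets $A_{f,\vec X}$ for $(f,\vec X)\in\mathcal T$, together with the countable (hence $v_0$-) set of reals $r$ for which $r^{-1}(0)$ or $r^{-1}(1)$ is finite, number at most $\cof\mathcal N<\cov v_0$, so they do not cover $2^\omega$; pick $r$ outside all of them. Then $\mathcal U$ selects $y_r\in\prod_n m(n)$ with $D^n_{y_r(n)}(r)\in\mathcal U$, and since $\mathcal U$ is a P-point the family $\{\,D^n_{y_r(n)}(r)\mid n\in\omega\,\}$ has a pseudointersection $Z_r\in\mathcal U$; fix $g\in\omega^\omega$ with $Z_r\setminus g(n)\subseteq D^n_{y_r(n)}(r)$ and choose $(f,\vec X)\in\mathcal T$ with $g\le^* f$ and $y_r(n)\in X_n$ for large $n$. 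As $r\notin A_{f,\vec X}$, the displayed property yields $U\in\mathcal U$ with $U\cap Z_r=\emptyset$, contradicting $Z_r\in\mathcal U$. I expect the main obstacle to be precisely the verification that $A_{f,\vec X}\in v_0$: one must re-run the two-case argument of Proposition~\ref{prop:kill} below an arbitrary Silver condition, and in particular cope with the fact that the auxiliary condition $p_1$ — on which $H$, hence the case, depends — must be fixed before $\mathcal U$ "answers"; as in Proposition~\ref{prop:kill} this is absorbed by the slack built into the growth of $m$ and the size of $C_n$ together with Lemma~\ref{lem:shift}.
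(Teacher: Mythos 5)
The paper does not actually present a proof of Theorem~\ref{thm:axiomatic-version}; it merely remarks that the proof of Proposition~\ref{prop:kill} ``can be reformulated'' to yield it and defers the details to~\cite{Osvaldo-thesis}. Your proposal carries out exactly the reformulation the paper gestures at: the Sacks-property data $(f,\langle X_n\rangle)$ is supplied by a $\cof\mathcal N$-sized family via Bartoszy\'nski's characterisations, the generic real is replaced by a real avoiding $\cof\mathcal N$-many $v_0$-small ``non-generic'' sets, and the heart of the matter --- that $A_{f,\vec X}\in v_0$ --- rests on the observation (which you make, and which is correct) that Lemma~\ref{lem:help1} and Corollary~\ref{cor:help}, and hence the entire Case~1/Case~2 computation of Proposition~\ref{prop:kill}, are genuinely combinatorial facts about an arbitrary total $r\supseteq p_2$ and require no genericity; the case distinction and the slack coming from $v$, $m$, $C_n$, and Lemma~\ref{lem:shift} are then used precisely as in Proposition~\ref{prop:kill}. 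This is, as far as one can tell from the paper, the intended argument, and the final contradiction via a P-point pseudointersection is handled correctly (including the ``for large $n$'' relaxation, which is absorbed by shrinking $U$ by a finite set).
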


\smallskip

An alternative version of results of this paper was suggested by Jonathan Verner.
The side-by-side product $\bigotimes_\omega \mathbf{PS}$ adds
a Silver generic real $r_\alpha$ for each coordinate $\alpha \in \omega$. 
Consider the pair of complementary splitting reals 
$X_\alpha^i = \bigcup \os I_{2n+i}(r_\alpha) \mid n \in \omega \cs$, $i \in 2$;
an argument similar to (and less technical than) the proof of Proposition~\ref{prop:kill} 
demonstrates the following.
\begin{claim*}
    Let $\mathcal U$ be a non-principal ultrafilter. 
    The product $\bigotimes_\omega \mathbf{PS}$ 
    forces that no pseudo-intersection of 
    $\os X_\alpha^{i(\alpha)} \mid \alpha \in \omega \cs$ is $\mathcal U$-positive, 
    and this remains to be the case in each further Sacks property extension.
\end{claim*}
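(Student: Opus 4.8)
The plan is to argue exactly as in the proof of Proposition~\ref{prop:kill}: below an arbitrary condition one produces a ground model set $U \in \mathcal U$ together with a stronger condition forcing $\dot Z \cap U = \emptyset$, where $\dot Z$ names the would-be pseudo-intersection of $\os X_\alpha^{\dot i(\alpha)}\mid\alpha\in\omega\cs$; this is precisely the assertion that $\dot Z$ is not $\mathcal U$-positive. (That $\mathcal U$ cannot then be extended to a P-point follows in the usual way: $X_\alpha^0\cup X_\alpha^1=\omega$ forces every ultrafilter extending $\mathcal U$ to contain some $X_\alpha^{i(\alpha)}$, hence to contain a pseudo-intersection of these $\mathcal U$-positive sets.) As in Proposition~\ref{prop:kill} one proves the statement for $\bigotimes_\omega\mathbf{PS}\iter\dot Q$ whenever this two-step iteration has the Sacks property; since $\bigotimes_\omega\mathbf{PS}$ has the Sacks property and a two-step iteration of Sacks-property forcings has it again, this covers both the bare product and every further Sacks-property extension.

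The one structural change is that the role of the single Silver real with its growing moduli $m(n)$ is now played by finite \emph{blocks} of coordinates. Fix in $V$ a partition $\omega=\bigsqcup_m P_m$ of the coordinates into consecutive finite blocks with $2^{\card{P_m}}>(m+1)^2$. Given a condition $(\bar p,\dot q)$ and names $\dot i,\dot Z$, with $\dot Z$ forced to be a pseudo-intersection, an application of the Sacks property (as in Proposition~\ref{prop:kill}, and crucially with no fusion) strengthens $(\bar p,\dot q)$ and produces, in $V$, a function $f$ and sets $Y_m\subseteq 2^{P_m}$ with $\card{Y_m}=m+1$ such that it is forced that $\dot Z\setminus f(\alpha)\subseteq X_\alpha^{\dot i(\alpha)}$ for every $\alpha$ and that $\dot i\restriction P_m\in Y_m$ for every $m$. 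Because no fusion is used, the rest of the argument does not care about the choice of $\dot Q$.

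Next I would pick an interval partition $\os A_m\mid m\in\omega\cs$ with $\min A_m$ large (past $f$ and past the co-domains of $\bar p$ on the block $P_m$), and extend $\bar p$ to $\bar p_1$ as follows: for each $\alpha\in P_m$ reserve a \emph{switch bit} $a_\alpha\in\cod\bar p_\alpha$ lying below $\min A_m$, and decide $r_\alpha$ everywhere on $[0,\max A_m]$ except at $a_\alpha$. Since $a_\alpha<\min A_m$, toggling the value of $r_\alpha$ at $a_\alpha$ reverses the parity of the interval index of every $k\in A_m$, and hence swaps $X_\alpha^0\cap A_m$ with $X_\alpha^1\cap A_m$; once the switches are fixed, these two halves of $A_m$ are decided by $\bar p_1$. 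Reading everything off from the ``all switches $0$'' configuration, one obtains in $V$ a partition of $A_m$ into $2^{\card{P_m}}$ \emph{cells} $c_m^v$, $v\in 2^{P_m}$, got by intersecting one chosen half of each $r_\alpha$ ($\alpha\in P_m$); setting the switches on $P_m$ to $t$ turns $\bigcap_{\alpha\in P_m}\mleft(X_\alpha^{\dot i(\alpha)}\cap A_m\mright)$ into the cell labelled $(\dot i\restriction P_m)\oplus t$. Together with the pseudo-intersection bound and the trap $Y_m$, every extension of $\bar p_1$ that sets the switches on $P_m$ to $t$ forces $\dot Z\cap A_m\subseteq W_m(t):=\bigcup_{v\in Y_m}c_m^{v\oplus t}$.

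Finally I would split according to whether $\bigcup_m W_m(\bar 0)\in\mathcal U$. If not, set all switches to $0$ and take $U=\omega\setminus\bigcup_m W_m(\bar 0)\in\mathcal U$; then the resulting condition forces $\dot Z\cap A_m\subseteq W_m(\bar 0)$ for every $m$, hence $\dot Z\cap U=\emptyset$. If so, apply the argument of Lemma~\ref{lem:shift} inside the group $2^{P_m}$ with coordinatewise addition modulo $2$ — the bad translations of $Y_m$ number at most $(m+1)^2<2^{\card{P_m}}$ — to choose $t_m\in 2^{P_m}$ with $(Y_m\oplus t_m)\cap Y_m=\emptyset$, set the switches on $P_m$ to $t_m$, and take $U=\bigcup_m W_m(\bar 0)\in\mathcal U$; here $W_m(t_m)\cap W_m(\bar 0)=\emptyset$ because cells with distinct labels are disjoint, so $\dot Z\cap A_m$ misses $W_m(\bar 0)$ and again $\dot Z\cap U=\emptyset$. (The finitely many $m$ for which $\min A_m$ cannot be pushed past $f$ on $P_m$ are absorbed by deleting a finite set from $U$.) The step that will take real work is the construction of $\bar p_1$: one must decide, in the ground model, the two relevant halves $X_\alpha^0\cap A_m$ and $X_\alpha^1\cap A_m$ of the generic real for each $\alpha$ in a whole block of coordinates, while keeping in every such coordinate a single switch bit located below $A_m$ that flips exactly those halves; this is where the side-by-side product is genuinely exploited. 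Everything else is lighter than in Proposition~\ref{prop:kill}: the moduli do not grow, one invocation of the Sacks property suffices, and the shift lemma is needed only in its crudest form.
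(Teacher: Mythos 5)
The paper does not actually supply a proof of this Claim; it only asserts that one exists along lines ``similar to (and less technical than)'' Proposition~\ref{prop:kill}. Your proposal fills in precisely such an argument, and it parallels Proposition~\ref{prop:kill} step for step: one Sacks-property application to trap $\dot i\restriction P_m$ in a set $Y_m$ of size $m+1$ and to bound the pseudointersection witness $f$; a structured extension $\bar p_1$ that leaves a single undecided ``switch'' bit $a_\alpha$ per coordinate below each payload interval $A_m$; the shift lemma transplanted from $\mathbb Z/m(n)$ to the group $(\mathbb Z/2)^{P_m}$ (with the counting $|Y_m\oplus Y_m|\le (m+1)^2<2^{|P_m|}$ taking over the role of Lemma~\ref{lem:shift}); and the same two-case split on whether a ground-model set lies in $\mathcal U$. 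I checked the mechanics and they go through: toggling $a_\alpha<\min A_m$ flips the parity of $|k\cap r_\alpha^{-1}(1)|$ for every $k\in A_m$ and so swaps the two halves of $A_m$ determined by $r_\alpha$; the cells $c_m^v$ are pairwise disjoint whether or not they are nonempty, which is all the case split needs; and the ``further Sacks extension'' clause is handled exactly as in Proposition~\ref{prop:kill} by working with $\bigotimes_\omega\mathbf{PS}\iter\dot Q$ and never using fusion. Your version is genuinely simpler than Proposition~\ref{prop:kill} in that there is no alternating interval partition, no $v(n)$ bookkeeping, and no need for the fallback partition $\mathcal A'$: the shifting room comes from the switch bits supplied ``for free'' by the product structure rather than from reserving $m(n)$ free coordinates of a single Silver condition inside $A_{2n}$. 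I see no gap.
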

Furthermore, it is possible to reason along the lines of the proof of 
Theorem~\ref{thm:Silver-product} to obtain a stronger version of the theorem. 
These results are to be included in forthcoming publications.

\begin{announcement}\label{thm:almost_Isbell}
    Assume $\mathsf{GCH}$,
    let $\kappa > \omega_1$ be a cardinal with uncountable cofinality.
    If $\bigotimes_\kappa \mathbf{PS}$ is the countable support product
    of $\kappa$ many Silver posets and
    $G \subset \bigotimes_\kappa \mathbf{PS}$ is a generic filter,
    then 
    \begin{multline*}
    V[G] \models \text{ For every non-principal ultrafilter } \mathcal U 
    \text{ there exists } \\ \os X_\alpha \mid \alpha \in \mathfrak c \cs \subset \mathcal U
    \text{ such that for each } y \in {[\mathfrak c]}^\omega \cap V \\
    \text{ no pseudointersection of } \os X_\alpha \mid \alpha \in y \cs 
    \text{ is an element of }\, \mathcal U.
    \end{multline*}
\end{announcement}

The motivation for stating this theorem 
comes from the problem of Isbell~\cite{Isbell} 
which asks for the existence of two Tukey non-equivalent ultrafilters on~$\omega$.
The problem can be equivalently formulated as a statement 
resembling the conclusion of Announcement~\ref{thm:almost_Isbell}, 
see~\cite{Natasha-Stevo}.

\begin{problem}[Isbell]
    Is it consistent that for each non-principal ultrafilter $\mathcal U$ on $\omega$
    there exists $\mathcal X \in {[\mathcal U]}^{\mathfrak c}$ 
    such that for each $\mathcal Y \in {[\mathcal X]}^\omega$ is 
    $\bigcap \mathcal Y \notin \mathcal U$?
\end{problem}

\section*{Appendix}

\noindent
At the request of the referee, we address here the situation 
concerning the random model. We point out the issue in the argument 
in~\cite{Cohen-random} used to reason for the existence of P-points 
in the random model. 
The reader may consult~\cite{GruffErrata,Fernandez-pathways} for more information.

It is an unpublished result of K.\ Kunen that if 
$\omega_1$ many Cohen reals are added to the ground model 
followed by adding $\omega_2$ many random reals, 
the resulting random model will contain a P-point. 
Recently A.\ Dow proved that P-points exist 
in the random model provided $\mathsf{CH}$ and $\square_{\omega_1}$ 
does hold in the ground model~\cite{Dow-random}.

The construction in~\cite{Cohen-random} uses the notion of a pathway. 
For a recent development and general treatment of pathways see~\cite{Fernandez-pathways}.

\begin{definition}
A sequence $\os A_{\alpha} \mid \alpha \in \kappa \cs$ is a 
\emph{pathway} if the following conditions hold.
\begin{enumerate}
\item $\omega^{\omega}= \bigcup\os A_\alpha \mid \alpha \in \kappa \cs$,
\item $A_{\alpha}\subseteq A_{\beta}$ for $\alpha < \beta$,
\item $A_{\alpha}$ does not dominate $A_{\alpha+1}$,\footnote{
I.e.\ there is a function in $A_{\alpha+1}$ 
not eventually dominated by any element of $A_\alpha$.
}
\item if $f,g\in A_{\alpha}$, then $(f \join g )\in A_{\alpha}$
(where $(f_0 \join f_1) \in \omega^\omega$  is defined by 
$(f_0 \join f_1)(2n+i) = f_i(n)$),
\item if $g$ is Turing reducible to $f$ and $f\in A_{\alpha}$, 
then $g\in A_{\alpha}$.
\end{enumerate}
\end{definition}

The following is~\cite[Theorem~1.1]{Cohen-random}.

\begin{theorem}
The existence of a pathway implies the existence of a $P$-point.
\end{theorem}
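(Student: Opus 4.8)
The plan is to try to build the P-point by transfinite recursion along the pathway, as an increasing union $\mathcal U = \bigcup_{\alpha < \kappa} \mathcal F_\alpha$ of filters on $\omega$, where $\mathcal F_\alpha$ is kept \emph{$A_\alpha$-coded}: it has a generating family whose members, identified with their increasing enumerations in $\omega^\omega$, all lie in $A_\alpha$. The first routine point is that this coding is closed under finite intersection --- from the join of the enumerations of $B_0$ and $B_1$ one computes the enumeration of $B_0 \cap B_1$, which therefore lies in $A_\alpha$ by conditions (4) and (5), so each $A_\alpha$ is a Turing ideal rich enough that $\mathcal F_\alpha$ is honestly controlled by $A_\alpha$-coded data. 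Because $\bigcup_\alpha A_\alpha = \omega^\omega$ by (1), every subset of $\omega$ is coded at some level, so interleaving an enumeration of $\mathcal P(\omega)$ into the recursion (deciding one set at each stage) would make $\mathcal U$ an ultrafilter, and the construction should specialize to Ketonen's theorem when the pathway is the one manufactured from a dominating scale under $\mathfrak d = \mathfrak c$.

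The real content is the P-point requirement, which has to be serviced at every stage of countable cofinality (and at dedicated successor stages): given a $\subseteq$-decreasing sequence $\os B_n \mid n \in \omega \cs$ with $B_n$ coded in $A_\alpha$, one must produce a pseudointersection $Z$ that is still compatible with every member of $\mathcal F_\alpha$ and is itself coded in $A_{\alpha+1}$. The candidate is a diagonal set $Z = \bigcup_n \bigl( B_n \cap [g(n), g(n+1)) \bigr)$ for a fast growing increasing $g$; any such $Z$ is a pseudointersection of the $B_n$, and by condition (3) the ideal $A_{\alpha+1}$ contains a function not eventually dominated by any member of $A_\alpha$, which after massaging (using $\le_T$-closure) becomes the desired $g$ with $Z$ coded in $A_{\alpha+1}$. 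The hope is that ``$g$ escapes $A_\alpha$'' is exactly what forces $Z \cap C$ to be infinite for every generator $C$ of $\mathcal F_\alpha$, since each such demand unravels into a requirement that $g$ exceed, infinitely often, some $A_\alpha$-coded threshold.

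The step I expect to be the genuine obstacle --- and, since the originally published proof is flawed, presumably where it comes apart --- is precisely this last hope: that a single $g \in A_{\alpha+1}$ can simultaneously witness compatibility of $Z$ with \emph{every} set in $\mathcal F_\alpha$. For $\alpha \ge \omega_1$ the filter $\mathcal F_\alpha$ has uncountably many --- up to $\card{\alpha}$ --- generators, so this is a simultaneous infinitely-often requirement against a family of that size, whereas condition (3) only buys escape from the ideal $A_\alpha$ in the eventual-domination ordering, a strictly weaker currency. Indeed the bare statement cannot be literally right without some constraint on $\kappa$ and on the sizes $\card{A_\alpha}$: the collection of reals Turing below a fixed real, together with $A_1 = \omega^\omega$, is a pathway of length $2$ in $\mathsf{ZFC}$, so ``pathway $\Rightarrow$ P-point'' with no side conditions would contradict Shelah's theorem. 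A correct proof therefore has to read ``pathway'' as something like a continuous, cofinal-in-$\mathfrak c$ sequence of \emph{small} Turing ideals, and even then must show that the non-domination at each successor step is strong enough to preserve positivity against all the accumulated generators; reconciling the two interleaved bookkeepings --- the subsets of $\omega$ to be decided versus the level at which the filter is coded --- while keeping both the ultrafilter property and the $A_\alpha$-coding, is where the real work, or the real gap, must lie.
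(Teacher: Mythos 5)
A point of order: the paper under review does not actually prove this theorem. It is quoted from~\cite{Cohen-random} (as that paper's Theorem~1.1) solely so the appendix can explain why the \emph{application} of the theorem in~\cite{Cohen-random} --- the construction of a pathway in the random model --- is flawed. There is no ``paper's own proof'' for your sketch to be compared against; your attempt has to be judged on its own terms.

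Your sketch has the right shape (a transfinite construction in the spirit of Ketonen, keeping each approximating filter $A_\alpha$-coded and using clause~(3) to supply a fast-growing diagonal at each relevant step), but the worry you flag is a little misplaced. ``Not dominated by $A_\alpha$'' already delivers a single $g$ escaping \emph{every} function of $A_\alpha$ infinitely often --- a simultaneous requirement against a family of size $|A_\alpha|$, which can be as large as $\mathfrak c$, so it is not an intrinsically weaker currency than what the compatibility demands ask for. The genuine difficulty is one step upstream: one must ensure that the test functions $h_C$ you allude to are themselves members of $A_\alpha$, which requires coding the \emph{entire} sequence $\os B_n \mid n\in\omega \cs$ --- not merely each $B_n$ separately --- by a single real lying in $A_\alpha$. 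A Turing ideal closed under finite joins need not be closed under that kind of countable packaging, so the bookkeeping has to arrange for the relevant $\omega$-sequences to appear as single reals at the levels where they are needed.

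Your most incisive observation is the one you relegate to an aside, and it deserves the headline. Taking $A_0 = \{ g \in \omega^\omega : g \leq_T r \}$ for any fixed $r$ and $A_1 = \omega^\omega$, the pair $\langle A_0, A_1 \rangle$ satisfies clauses (1)--(5) as literally written, so a ``pathway'' in this sense exists outright in $\mathsf{ZFC}$, and the theorem as stated would then contradict Shelah's theorem that P-points may fail to exist. You are therefore right that the definition as transcribed in this appendix must be missing a hypothesis present in~\cite{Cohen-random} --- presumably some combination of constraints on the length $\kappa$ and on the $A_\alpha$ themselves (that the chain is continuous, that no $A_\alpha$ equals $\omega^\omega$, that $|A_\alpha|<\mathfrak c$). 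Catching that a cited statement is false under the quoted hypotheses is exactly the right kind of check; here it exposes a gap in the present paper's \emph{rendering} of the definition rather than in the theorem of~\cite{Cohen-random} itself.
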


This result is a useful tool for proving the existence of P-points in
certain models. In order to prove that there is a P-point in the random
model (i.e.\ the model obtained by adding $\omega_{2}$ random reals to a model
of $\mathsf{CH}$), the author of~\cite{Cohen-random} aims to
construct a pathway in the generic extension.
We do not know whether there are pathways in this model.  
The construction from~\cite{Cohen-random} does not work, 
as we will demonstrate.

We denote $\mathbf{B}$ the random forcing 
and $\mathbf{B}(\omega_{2})$ the poset for adding $\omega_{2}$ 
many random reals. 
It is well known that if $M$ is a countable elementary submodel
of $H(\theta)$ (for some sufficiently large cardinal $\theta$), 
$r \colon \omega_{2}\to 2$ is a $\mathbf{B}(\omega_{2})$-generic 
function over $V$, and $\pi \colon \omega\to \omega_{2}$ 
is an injective function in $V$ (but not necessarily $M$), then 
$M[r\circ\pi]$ is a $\mathbf{B}$-generic extension of $M$ 
(see~\cite{Cohen-random} for more details).

We outline the construction in~\cite{Cohen-random}. 
Using $\mathsf{CH}$ in the ground model $V$ 
find  $\os M_{\alpha} \mid {\alpha\in\omega_{1}} \cs$, 
an increasing chain of countable elementary submodels of $H(\theta)$
such that 
$\omega^{\omega} \subset \bigcup \os M_{\alpha} \mid \alpha\in\omega_{1} \cs$. 
Let $r \colon \omega_{2}\to 2$ be a $\mathbf{B}(\omega_{2})$-generic function over $V$.
Work in $V[r]$; let $\Pi$ be the set of all injective functions from
$\omega$ to $\omega_{2}$ in $V$. 
For every $\alpha<\omega_{1}$ define 
$A_{\alpha}= \bigcup \os \omega^{\omega} \cap M_\alpha[r\circ\pi] \mid \pi\in \Pi \cs$. 
The argument in~\cite{Cohen-random} relies on $\os A_{\alpha}\mid \alpha\in\omega_{1} \cs$
being a pathway. We show that this is not the case. 

Fix $\mathcal{P}=\os  P_{n} \mid n\in\omega \cs \subseteq {[\omega]}^{\omega}$ 
a partition of $\omega$ and let $\mathrm{Q}= \os q_{n}\mid n\in\omega \cs$ 
be an enumeration of the rational numbers.
Furthermore, we take both $\mathcal{P}$ and the enumeration of $\mathrm{Q}$ to
be definable. 
For $f,g \colon \omega\to 2$ we define 
$f\mathbin{\star} g \colon \mathrm{Q}\to 2$ by declaring 
$f\mathbin{\star} g ( q_{n})= 1$ if and only if $f \restriction P_{n} = g\restriction P_{n}$. 
The following proposition implies that no $A_{\alpha}$
is closed under the $\join$ operation.

\begin{proposition}
Let $r \colon \omega_{2}\to 2$ be a $\mathbf{B}(\omega_{2})$-generic 
function over $V$, and let $M$ be a countable elementary submodel of $H(\theta)$. 
There are $\pi_{0},\pi_{1} \in \Pi$ 
such that there is no 
$\sigma \in \Pi$ 
for which $M[r\circ\pi_{0}] \cup M[r\circ\pi_{1}] \subseteq M [r\circ\sigma]$. 
\end{proposition}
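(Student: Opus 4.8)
The plan is to first reformulate the conclusion in a cleaner form. Since $r\circ\pi$ is $\mathbf B$-generic over $M$ for every injective $\pi\in\Pi$, the model $M[r\circ\pi]$ is the least transitive model of $\mathsf{ZFC}$ containing $M$ and $r\circ\pi$; hence $M[r\circ\pi_0]\cup M[r\circ\pi_1]\subseteq M[r\circ\sigma]$ holds exactly when both $r\circ\pi_0$ and $r\circ\pi_1$ lie in $M[r\circ\sigma]$. So it suffices to build $\pi_0,\pi_1\in\Pi$ for which no single $\sigma\in\Pi$ has $r\circ\pi_0,r\circ\pi_1\in M[r\circ\sigma]$. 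I will use two standard facts about random forcing: every real of a random extension $M[r\circ\pi]$ has the form $F(r\circ\pi)$ for a Borel $F$ coded in $M$; and for $W\subseteq\omega_2$ the restriction $r\restriction W$ is $\mathbf B(W)$-generic over $V[r\restriction(\omega_2\setminus W)]$, so disjointly-indexed pieces of $r$ behave as mutually generic random reals.

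The first step is a \emph{localisation lemma}: if $r\circ\pi\in M[r\circ\sigma]$, then $\operatorname{ran}\pi\subseteq^{*}\operatorname{ran}\sigma$. Writing $r\circ\pi=F(r\circ\sigma)$ with $F$ coded in $M\subseteq V$ and composing with the reindexing maps $\pi^{-1}$ and $\sigma$, which are in $V$, one obtains a Borel function $\Phi$ coded in $V$ with $r\restriction W=\Phi(r\restriction\operatorname{ran}\sigma)$, where $W=\operatorname{ran}\pi\setminus\operatorname{ran}\sigma$. If $W$ were infinite, $\mathbf B(W)$ would be atomless, so $r\restriction W$, being $\mathbf B(W)$-generic over $V[r\restriction(\omega_2\setminus W)]\supseteq V[r\restriction\operatorname{ran}\sigma]$, could not belong to $V[r\restriction\operatorname{ran}\sigma]$, contradicting the displayed identity.

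For the construction, use that $M$ is countable to fix an infinite, co-infinite set $E=\os e_n\mid n\in\omega\cs\subseteq\omega$ with $E\notin M$ and with $e_n\in P_n$ for each $n$, so that $E$ meets every block of the fixed definable partition $\mathcal P$ in exactly one point. Choose injections $\pi_0,\pi_1\colon\omega\to\omega_2$ in $V$ with $\pi_0\restriction(\omega\setminus E)=\pi_1\restriction(\omega\setminus E)$ and with the three sets $\pi_0[\omega\setminus E]$, $\pi_0[E]$, $\pi_1[E]$ pairwise disjoint. Put $\alpha_n=\pi_0(e_n)$, $\beta_n=\pi_1(e_n)$, $f=r\circ\pi_0$, $g=r\circ\pi_1$. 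Then $f\restriction P_n=g\restriction P_n$ if and only if $r(\alpha_n)=r(\beta_n)$, so (transporting $\mathrm Q$ to $\omega$ by the fixed enumeration) $f\star g$ is the complement of the sequence $\la r(\alpha_n)\oplus r(\beta_n)\mid n\in\omega\ra$.

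Finally, suppose towards a contradiction that some $\sigma\in\Pi$ has $f,g\in M[r\circ\sigma]$. Because $\star$ is a Borel operation built from $\mathcal P$ and the definable enumeration of $\mathrm Q$, both of which belong to $M$, also $f\star g\in M[r\circ\sigma]$; and by the localisation lemma $\operatorname{ran}\pi_0\cup\operatorname{ran}\pi_1\subseteq^{*}\operatorname{ran}\sigma=:T$, so after a finite modification of $\pi_0,\pi_1$ we may drop the star. Fix Borel $F_0,F_1,H$ coded in $M$ with $F_i(r\circ\sigma)=r\circ\pi_i$ and $H(r\circ\sigma)=f\star g$, and set $g_i=\sigma^{-1}\circ\pi_i\colon\omega\to\omega$, an injection in $V$ satisfying $(r\circ\sigma)(g_i(n))=(r\circ\pi_i)(n)$ and $E=\os n\mid g_0(n)\neq g_1(n)\cs$. \textbf{The main obstacle is this last step:} one must deduce from the three identities --- each holding at the $M$-generic point $r\circ\sigma$ --- that $E\in M$, contradicting the choice of $E$. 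The heuristic is that on the coordinates indexed by $\omega\setminus E$ the Borel names $F_0$ and $F_1$ read the same coordinate of $r\circ\sigma$ (since $g_0(n)=g_1(n)$ there), while on $E$ they read distinct coordinates and $H$, the $\star$-comparison, records block by block exactly where this happens; so $M$ ought to recover $E$ from $F_0,F_1,H$ and $\mathcal P$ as the set of blocks on which the coordinatewise comparison of $F_0$ and $F_1$ is not almost surely constant. Making this rigorous is the delicate point: the pointwise-at-the-generic relations must be upgraded to measure-theoretic statements visible to $M$ even though $M$ knows none of $\pi_0,\pi_1,\sigma,E$, using the Lebesgue density property of random generics together with the genericity of $r\circ\sigma$ over $M$. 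It is exactly here that $\star$ is needed: the coordinatewise sum of $r\circ\pi_0$ and $r\circ\pi_1$ would only expose the part of $E$ where $r(\alpha_n)\neq r(\beta_n)$, whereas $\star$ forces the comparison of $F_0$ and $F_1$ across each entire block to become an object of $M$.
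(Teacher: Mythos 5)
Your proposal takes a genuinely different route from the paper, and it has a real gap that you yourself flag. The key structural difference is in the design of $\pi_0,\pi_1$. You make $\pi_0$ and $\pi_1$ agree everywhere except on a single coordinate per block of $\mathcal P$, so that $(f\star g)(q_n)=1$ iff $r(\pi_0(e_n))=r(\pi_1(e_n))$. Since $\pi_0(e_n)\neq\pi_1(e_n)$ for every $n$, this is a sequence of independent fair-coin events: $(f\star g)^{-1}(1)$ is a \emph{random} subset of $\mathrm Q$, carrying no information about $E$ at all. Your fallback heuristic --- recovering $E$ as the set of coordinates on which $F_0$ and $F_1$ are not almost surely equal --- founders on a basic fact about random forcing: genericity only gives that $F_i$ and $x\mapsto x\circ g_i$ agree \emph{at} the generic point $r\circ\sigma$, i.e.\ on a set of positive measure, not almost everywhere. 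A Borel name coded in $M$ is determined only up to null modification, so nothing forces the measures $\mu\{x: F_0(x)(k)=F_1(x)(k)\}$ to equal $1$ off $E$ and $1/2$ on $E$; $M$ cannot read $E$ off of $F_0,F_1,H$. There is also a secondary issue with what you are trying to contradict: even if you could show $E\in M[r\circ\sigma]$, turning ``$E\in V\setminus M$ and $E\in M[r\circ\sigma]$'' into a contradiction needs its own argument.

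The paper avoids all of this by choosing $\pi_0,\pi_1$ so that the $\star$-comparison is \emph{deterministic} block by block rather than random. Fix $S\subseteq\mathrm Q$ order-isomorphic to $\delta:=M\cap\omega_1$, and require $\pi_0\restriction P_n=\pi_1\restriction P_n$ when $q_n\in S$, while $\pi_0[P_n]\cap\pi_1[P_n]=\emptyset$ when $q_n\notin S$. On an $S$-block the restrictions of $f$ and $g$ agree outright; on a non-$S$-block the two restrictions are read off disjoint, infinite sets of coordinates of $r$, so a single genericity argument (the event ``they coincide'' is a $V$-coded null set) makes them differ. Hence $(f\star g)^{-1}(1)=S$ literally, not merely almost surely, and $\delta$ --- the order type of $S$ --- lies in $M[r\circ\sigma]$. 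That is an immediate contradiction, since a generic extension of $M$ has the same ordinals and $\delta=M\cap\omega_1$. The moral is that the target object should be coded into the \emph{deterministic} part of $f\star g$, and should be an ordinal rather than an arbitrary real, so that the final contradiction is absolute. Note also that the paper's argument has no need of your localisation lemma.
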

\begin{proof}
Let $\delta = M\cap\omega_{1}$. Since $\delta$ is countable ordinal, 
there is $S\subseteq\mathrm{Q}$ order isomorphic to $\delta$. 
Now choose the functions 
$\pi_{0},\pi_{1} \in \Pi$ 
such that the following holds:
\begin{itemize}
\item If $q_{n} \in S$, then $\pi_{0}\restriction P_{n}=\pi_{1}\restriction P_{n}$.
\item If $q_{n}\notin S$, then $\pi_{0}[P_{n}] \cap \pi_{1}[P_{n}] =\emptyset$.
\end{itemize}

Recall that both $M[r\circ \pi_{0}]$ and $M[r\circ\pi_{1}]$ 
are $\mathbf{B}$-generic extensions of $M$. 
Assume that $\os r\circ \pi_{0}, r\circ \pi_{1}\cs \subset M[r\circ\sigma]$ 
for some $\sigma \in\Pi$. 
Then also $(r\circ \pi_{0}) \mathbin{\star} (r\circ \pi_{1}) \in M[r \circ\sigma]$, 
and a simple genericity argument implies 
${\left({(r\circ \pi_{0}) \mathbin{\star} (r\circ \pi_{1})}\right)}^{-1}(1) = S \in M[r \circ\sigma]$. 
Now $\delta \in M[r \circ\sigma]$ is a contradiction with
$M[r \circ\sigma]$ being a generic extension of $M$.
\end{proof}

\pagebreak[0]
\section*{Acknowledgments}

\noindent
The authors would like to thank Jindřich Zapletal
for multiple inspiring conversations
and for suggesting the argument used to prove Theorem~\ref{thm:Silver-product}.
The authors would also like to thank Michael Hrušák and Jonathan Verner for valuable
discussions on the subject.

\bibliography{references}
\bibliographystyle{amsalpha}

\end{document}